\DeclareMathAlphabet{\curly}{U}{rsfs}{m}{n}
\theoremstyle{remark}
\newtheorem{remark}{Remark}
\newtheorem{problem}{Problem}
\theoremstyle{plain}
\newtheorem{conj}{Conjecture}
\newtheorem{defn}{Definition}
\newtheorem{lemma}{Lemma}[section]
\newtheorem{thm}{Theorem}
\newtheorem{cor}{Corollary}
\numberwithin{equation}{section}
\newcommand{\bal}{\[\begin{aligned}}
\newcommand{\eal}{\end{aligned}\]}
\newcommand{\be}{\begin{equation}}
\newcommand{\ee}{\end{equation}}
\newcommand{\benn}{\begin{equation*}}
\newcommand{\eenn}{\end{equation*}}
\renewcommand{\leq}{\leqslant}
\renewcommand{\geq}{\geqslant}
\newcommand\ba{\begin{eqnarray}}
\newcommand\ea{\end{eqnarray}}
\newcommand{\lb}{\left(}
\newcommand{\rb}{\right)}
\newcommand{\lcb}{\left\{}
\newcommand{\rcb}{\right\}}
\newcommand{\lsb}{\left[}
\newcommand{\rsb}{\right]}
\newcommand{\mrm}{\mathrm}
\newcommand{\nn}{\nonumber}
\colorlet{darkblue}{blue!70!black}
\colorlet{darkgreen}{green!70!black}
\colorlet{darkorange}{orange!70!black}
\colorlet{darkcyan}{cyan!70!black}
\def\bs#1{{\color{red}#1}}
\begin{document}

\title{The Riemann zeta function and exact exponential sum identities of divisor functions}

\author[M.~Nastasescu]{Maria Nastasescu}
\address{Department of Mathematics, 2033 Sheridan Road, Northwestern University, Evanston, IL 60207, USA}
\email{mnastase@math.northwestern.edu}

\author[N.~Robles]{Nicolas Robles}
\address{RAND Corporation, Engineering and Applied Sciences, 1200 S Hayes St, Arlington, VA 22202, USA}
\email{nrobles@rand.org}

\author[B.~Stoica]{Bogdan Stoica}
\address{Martin A. Fisher School of Physics, Brandeis University, Waltham, MA 02453, USA}
\email{bstoica@brandeis.edu}

\author[A.~Zaharescu]{Alexandru Zaharescu}
\address{Department of Mathematics, University of Illinois, 1409 West Green
Street, Urbana, IL 61801, USA; \textnormal{and} Institute of Mathematics of the Romanian academy, P.O. BOX 1-764, Bucharest, Ro-70700, Romania}
\email{zaharesc@illinois.edu}

\begin{abstract}
We prove an explicit integral formula for computing the product of two shifted Riemann zeta functions everywhere in the complex plane. We show that this formula implies the existence of infinite families of exact exponential sum identities involving the divisor functions, and we provide examples of these identities. We conjecturally propose a method to compute divisor functions by matrix inversion, without employing arithmetic techniques.

\end{abstract}

\subjclass[2020]{Primary: 11M06. Secondary: 11N64, 11L03. \\ \indent \textit{Keywords and phrases}: Riemann zeta function, generalized divisor functions, exact exponential sums involving arithmetic functions, matrix techniques for equation solving, special functions.}

\thanks{BRX-TH-6716716}

\maketitle

\section{Introduction}

Set $s = \sigma + it$ with $\sigma, t \in \mathbb{R}$ and let
\be
\zeta(s) := \sum_{n =1}^\infty \frac{1}{n^s} = \prod_{p} \frac{1}{ 1-p^{-s}} 
\ee
for $\Re(s)>1$, and otherwise by analytic continuation, denote the Riemann zeta function. Matiyasevich introduced in a recent paper \cite{Matiyasevich} a numerical method to approximate $\zeta(s)$ to very high precision, by considering a judiciously truncated Euler product. This precision was, in turn, then used to elucidate certain numerical properties of the non-trivial zeros $\rho$ of the Riemann zeta function. A salient feature of this approach is the ability to compute with good precision the numerical values of non-trivial Riemann zeta zeros from only a few factors in the truncated Euler product. The observations originally made in \cite{Matiyasevich} were stated as conjectures, as well as questions, and some of these conjectures were proved in the affirmative by two of the current authors in \cite{NastasescuZaharescu}.

In the present paper we consider families of approximations for a product of two copies of the Riemann zeta function, one shifted by an integer $a$. These types of products have interesting arithmetic properties, since the generating function of the divisor function
\begin{align} \label{sigmaandefn}
\sigma_a(n) := \sum_{d|n} d^a,
\end{align}
with $n \in \mathbb{N}$ and $a \in \mathbb{C}$, is given by
\begin{align} \label{sigmaangenerating}
\zeta(s)\zeta(s-a) = \sum_{n=1}^\infty \frac{\sigma_a(n)}{n^s} \quad \textnormal{for} \quad \Re(s) >1,\ \Re(s) > \Re(a)+1.
\end{align}

Let $\xi$ denote the Riemann Xi function
\be
\xi(s) \coloneqq \frac{1}{2}s(s-1)\pi^{-\frac{s}{2}} \Gamma\lb\frac{s}{2}\rb \zeta(s),
\ee with $\Gamma(s)$ the Euler Gamma function. By virtue of the functional equation one has $\xi(s) = \xi(1-~s)$. In the cases $a=1,3,5$ we prove that our families indeed approximate $\zeta(s)\zeta(s-a)$ with high precision, and furthermore they induce a representation for $\zeta(s_0)\zeta(s_0-a)$ at any $s_0\in\mathbb{C}$, in terms of a vertical line integral as
\be
\xi(s_0) \xi(s_0 - a) = \frac{1}{2\pi i} \sum_{n=1}^\infty \int_{\Re(s)=\sigma}  \sigma_a(n)  \lsb \frac{g(s)g(s-a)}{s-s_0} \frac{1}{n^s}  + \frac{g(s)g(s+a)}{s-1+s_0} \frac{1}{n^{s+a}} \rsb ds,
\ee 
where the function $g(s)$ is given by
\be
\label{eq12gs}
g(s) \coloneqq \pi^{-\frac{s}{2}} (s-1)\Gamma\lb \frac{s}{2} +1 \rb,
\ee
and $\sigma$ is any real number such that $\sigma> \max\lb \Re(s_0),1-\Re(s_0) \rb $ for $a=1,3$, and in the case $a=5$ $\sigma> \max\lb \Re(s_0),1-\Re(s_0),3 \rb $.

Comparing this presentation with the usual Dirichlet series at $\Re(s)>1$, we prove infinite families of constraint relations involving the divisor functions $\sigma_a(n)$ for $a=1,3,5$. As an example, we show that the following three relations hold:
\ba
\sum_{n=0}^\infty \sigma_1(n) \lsb 2 \pi  n (2\pi n-3)+1\rsb e^{-2\pi n} &=& \frac{1}{24}, \\
\label{eq16}
\sum_{n=1}^\infty \sigma_3(n) \lb \pi n - 1 \rb e^{-2\pi n} &=& \frac{1}{240}, \\
\label{eq1p7res}
\sum_{n=1}^\infty \sigma_5(n) e^{-2\pi n } &=& \frac{1}{504}.
\ea

One immediate consequence of our relations is that we obtain results on the transcendentality of certain sums involving divisor functions. For example, in Eq. \eqref{eq16} at least one of the sums $ \sum_{n=1}^\infty \sigma_3(n) n e^{-2\pi n}$ and $ \sum_{n=1}^\infty \sigma_3(n) e^{-2\pi n}$ must be a transcendental number. Note that the analogue of this second sum for $\sigma_5(n)$ is rational, by Eq. \eqref{eq1p7res}. We will give more examples of such identities in the main body of the paper.

Finally, in Section \ref{secfinremarks} we put forward some problems and conjectures. The problems can be approached by interested readers using the techniques presented in this paper, however the conjectures may require novel ideas. In particular, we conjecture that the values of the divisor functions $\sigma_{a=1,3,5}(n)$ can be obtained by inverting certain matrices, without employing divisibility techniques. If this is true then each such inversion can be used as a primality test. We present numerical evidence for the divisor function values obtained in this manner in Tables \ref{tab2} -- \ref{tab4}.

\section{The construction}

\noindent In this section we introduce our construction. Let
\be
\zeta_p(s) \coloneqq \frac{1}{1-p^{-s}}
\ee
be the local factor of $\zeta(s)$ at place $p$. The local factor at the Archimedean place is $g(s)$ in \eqref{eq12gs}.
We consider a finite product of the local factors associated to $\zeta(s)\zeta(s-a)$. This truncated Euler product has an infinite number of poles at points $s\in\mathbb{C}$, where the reciprocals of the local factors cancel. Subtracting from the finite Euler product the principal part, defined as the sum of the principal parts at all the poles, produces the regular part. Symmetrizing this regular part under $s\to 1-s$ provides the approximation to $\zeta(s)\zeta(s-a)$. The precision with which this function estimates values of $\zeta(s) \zeta(s-a)$ depends on the number of primes used in the truncated Euler product.

More precisely, for an odd $a$ we define the finite Euler product
\be
\label{eqXiEuler}
\Xi^\mrm{Euler}_{N,a}(s)\coloneqq g(s)g(s-a) \prod_{p=2}^{p_{N}} \frac{1}{1-p^{-s}} \frac{1}{1-p^{-(s-a)}},
\ee
where $p_N$ is the cutoff in the number of  primes, and the product is understood to run only over prime numbers.

The finite Euler product $\Xi^\mrm{Euler}_{N,a}(s)$ in Eq.~\eqref{eqXiEuler} is a meromorphic function, with poles in the complex plane at the locations where the inverse local factors have zeros. Because $a$ is odd, these poles are all simple, except for the poles at $s=0$ and $s=a$. The points $s=0,a$ are poles for all finite place local factors. We denote the set of poles by $\mathfrak{O}_{N,a}$,
\begin{align}
\label{eq24poles}
\mathfrak{O}_{N,a} \coloneqq &\bigcup_{j=1}^{N} \lcb \frac{2\pi i m}{\ln p_j} |m\in\mathbb{Z} \rcb \cup \lcb a+ \frac{2\pi i m}{\ln p_j} |m\in\mathbb{Z} \rcb \nonumber \\
&\cup \lcb -2l | l\in \mathbb{N}_{>0} \rcb \cup \lb \lcb a - 2l | l\in \mathbb{N}_{> 0} \rcb - \{1 \} \rb.
\end{align}
In Eq. \eqref{eq24poles} the first two sets are the poles of the finite place local factors, and the last two sets are the poles of $g(s)g(s-a)$. Because $g(1)=0$, we see that $g(s)g(s-a)$ cannot have a pole at $s=1$.  

For $s_\star\in \mathfrak{O}_{N,a}$, we define the principal part $\Xi^{\mrm{pp},s_\star}_{N,a}(s)$ of $\Xi^\mrm{Euler}_{N,a}(s)$ at $s_\star$ as the sum of the negative powers in the Laurent expansion of $\Xi^\mrm{Euler}_{N,a}(s)$ around $s_\star$, that is
\be
\Xi^{\mrm{pp},s_\star}_{N,a}(s) \coloneqq \sum_{i=1}^{o_\star} \frac{r^{(i)}_\star}{\lb s-s_\star \rb^i },
\ee
where $o_\star$ is the order of the pole and $r^{(i)}_\star$ is the coefficient in the Laurent expansion. Then the principal part $\Xi^{\mrm{pp}}_{N,a}(s)$ of $\Xi^\mrm{Euler}_{N,a}(s)$ is defined as the sum
\be
\label{eq28Xidef}
\Xi^{\mrm{pp}}_{N,a}(s) \coloneqq \sum_{s_\star\in \mathfrak{O}_{N,a}} \Xi^{\mrm{pp},s_\star}_{N,a}(s).
\ee

\begin{remark}
The sum in $\Xi_{N,a}^\mrm{pp}(s)$ is well-defined (i.e. it converges rapidly), due to the decay of the gamma factors in the increasing $|\Im(s)|$ direction, and in the decreasing $\Re(s)$ direction.
\end{remark}

\begin{defn}
The regular part $\Xi_{N,a}^\mrm{ingoing}(s)$ of $\Xi_{N,a}^\mrm{Euler}(s)$ is
\be
\Xi^\mrm{ingoing}_{N,a}(s) \coloneqq \Xi^\mrm{Euler}_{N,a}(s) - \Xi_{N,a}^\mrm{pp}(s),
\ee
and furthermore we define 
\be
\label{hereeq28}
\Xi_{N,a}(s) \coloneqq \Xi^\mrm{ingoing}_{N,a}(s) + \Xi^\mrm{ingoing}_{N,-a}(1-s).
\ee
\end{defn}
Note that by construction $\Xi^\mrm{ingoing}_{N,a}(s)$ and $\Xi_{N,a}(s)$ are entire.

As we will show, $\Xi_{N,a}(s)$ approximates the completed zeta function product $\xi(s)\xi(s-a)$ in the limit $N\to\infty$ (we recall that $\xi(s)\coloneqq g(s) \zeta(s)$, with $g(s)$ defined in Eq. \eqref{eq12gs}).

\begin{remark}
By construction, from Eq. \eqref{hereeq28} the function $\Xi_{N,a}(s)$ satisfies the functional equation
\be
\Xi_{N,a}(s) = \Xi_{N,a}(1-s).
\ee
\end{remark}

\section{Proof of the integral formula}

To the set up the framework for our proofs we first make the following definition.

\begin{defn}
A closed contour $\mathcal{C}$ in the complex plane is sparse with respect to $\mathfrak{O}_{N,a}$ if for $s\in \mathcal{C}$ we have
\be
\min_{p\leq p_N} \min_{\substack{s_\star\in\mathfrak{O}_{N,a}\\\Im(s_\star)\neq 0}} |s-s_\star| \gg \frac{1}{p_N},
\ee
as well as
\be
\min_{\substack{s_\star \in \mathfrak{O}_{N,a}\\ \Im(s_\star) = 0}} |s-s_\star| \gg 1.
\ee 
\end{defn}

\begin{lemma}
\label{lemmaexercise}
We have
\be
\Xi_{N,a}^\mrm{pp}(s) \ll_{N,a} \frac{1}{1+|s|}.
\ee
\end{lemma}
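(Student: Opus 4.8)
Throughout I assume $s$ lies on a sparse contour, so that $s$ is kept away from $\mathfrak{O}_{N,a}$ in the sense of the preceding definition; some such restriction is needed, since $\Xi_{N,a}^{\mrm{pp}}$ itself has poles and the asserted bound cannot hold pointwise near them. The plan is to first bound the Laurent coefficients $r^{(i)}_\star$ at every pole, and then to sum the principal parts in \eqref{eq28Xidef} by separating the poles into those far from $s$ and those near $s$.

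To estimate the residues I would split $\mathfrak{O}_{N,a}$ into the three families appearing in \eqref{eq24poles}: the two poles of order $N$ at $s=0$ and $s=a$; the simple poles on the vertical lines $\Re(s)\in\{0,a\}$ at ordinates $2\pi m/\ln p_j$; and the simple poles on the negative real axis at $s=-2l$ and $s=a-2l$ coming from the Gamma factors of $g$. For a vertical pole $s_\star=2\pi i m/\ln p_j$ the residue of $1/(1-p_j^{-s})$ equals $1/\ln p_j$, so $r_\star$ is this times the value of $g(s)g(s-a)$ and of the remaining local factors at $s_\star$. By Stirling, $|g(s_\star)g(s_\star-a)|\ll_a e^{-\pi|\Im(s_\star)|/2}$ up to polynomial factors, and since $|\Im(s_\star)|=2\pi|m|/\ln p_j$ this decays exponentially in $|m|$. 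For the real poles $s=-2l$ the residue carries the factorially small factor $1/(l-1)!$ from $\Gamma(s/2+1)$, reinforced by the factorial decay of $g(s-a)$ at a negative half-integer, while the local factors $1/(1-p^{2l})$ stay bounded; these residues therefore decay faster than any power of $l$.

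The one genuinely delicate point, which I expect to be the main obstacle, is that at a vertical pole $s_\star=2\pi i m/\ln p_j$ the leftover local factors $1/(1-p^{-s_\star})$ with $p\ne p_j$ are not bounded uniformly in $m$: the angles $2\pi m\,\ln p/\ln p_j$ equidistribute and hence come arbitrarily close to $0$. Here I would invoke an effective lower bound for the linear form $m\ln p-m'\ln p_j$ (Baker's theorem, equivalently the finiteness of the irrationality measure of $\ln p/\ln p_j$), which yields $|1-p^{-s_\star}|\gg_N |m|^{-\mu}$ for some $\mu=\mu(N)$. Consequently these factors grow at most polynomially in $|m|$ and are overwhelmed by the exponential decay of the Gamma factors. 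This establishes $\sum_{s_\star}|r_\star|\ll_{N,a}1$ over the simple poles, i.e. the rapid convergence already asserted in the Remark, and is the heart of the argument.

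With the residues summable the rest is routine. I would split the sum according to whether $|s_\star|\le|s|/2$ or $|s_\star|>|s|/2$. In the first case $|s-s_\star|\ge|s|/2$, so these terms contribute at most $(2/|s|)\sum_{s_\star}|r_\star|\ll_{N,a}1/|s|$. In the second case $|s_\star|>|s|/2$ forces $|\Im(s_\star)|$ or $l$ to be large, so the tail $\sum_{|s_\star|>|s|/2}|r_\star|$ is exponentially small in $|s|$, while sparseness gives $|s-s_\star|\gg 1/p_N$; the product is $\ll_{N,a}p_N e^{-c|s|}$, which is $\ll_{N,a}1/(1+|s|)$. The two order-$N$ poles contribute $\sum_{i=1}^N r^{(i)}/s^i$ and $\sum_{i=1}^N r^{(i)}/(s-a)^i$, and on a sparse contour (where $|s|,|s-a|\gg1$) these are dominated by their $i=1$ terms, hence also $\ll_{N,a}1/(1+|s|)$. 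Finally, for bounded $|s|$ one has $1/(1+|s|)\asymp1$ and only finitely many poles are nearby, each such term being $\ll_{N,a}p_N|r_\star|$, so the bound holds there as well. Adding the pieces gives $\Xi_{N,a}^{\mrm{pp}}(s)\ll_{N,a}1/(1+|s|)$.
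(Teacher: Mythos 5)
Your argument is correct, but it is worth noting that the paper itself contains no proof of this lemma: its entire proof is a pointer to relations (4.2)--(4.7) in the proof of Theorem 1 of \cite{NastasescuZaharescu}, where the analogous bound is established for the one-factor approximation to $\xi(s)$. What you have written is, in substance, a self-contained two-factor adaptation of that argument, and it supplies exactly what the citation leaves implicit. Three points of comparison. First, you are right that the statement must be read for $s$ on sparse contours --- $\Xi^{\mrm{pp}}_{N,a}$ has poles, so the stated bound is false pointwise --- and that is indeed the only way the lemma is used (on the sparse contours $\mathcal{C}$, $\mathfrak{C}$ in the proof of the Theorem); the paper glosses over this. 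Second, you correctly isolate the one non-routine ingredient: at a pole $s_\star=2\pi i m/\ln p_j$ the spectator factors $(1-p^{-s_\star})^{-1}$, $p\neq p_j$, require a Diophantine lower bound, and Baker's theorem gives $|1-p^{-s_\star}|\gg_N |m|^{-\mu}$, which the Gamma decay then beats. This input is genuinely needed for a residue-by-residue bound: the elementary alternative $|p^m-p_j^n|\geq 1$ only yields $|1-p^{-s_\star}|\gg e^{-m\ln p}$, which loses to the Gamma decay once $N$ is large; the same obstruction is already present in the one-factor case treated in \cite{NastasescuZaharescu}. Third, the two-factor setting has features absent there which you handle correctly: the shifted factors $(1-p^{-(s_\star-a)})^{-1}$ are harmless since $|p^{a-s_\star}|=p^{a}\geq 2$ on the line $\Re(s_\star)=0$, and the real poles of $g(s)g(s-a)$ give factorially small residues. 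Two cosmetic slips, neither affecting the argument: for $a=1$ the pole at $s=a$ has order $N-1$ rather than $N$, because $g(1)=0$; and for $a\geq 5$ finitely many poles of $g(s-a)$ (e.g.\ $s=3$ when $a=5$) lie on the positive real axis rather than the negative one, though they are handled identically.
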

\begin{proof}
The reader is referred to the proof of Theorem 1, relations (4.2) -- (4.7), in~\cite{NastasescuZaharescu}.
\end{proof}

\begin{thm} 
Pick integer $N>0$, and let $a$ be an odd positive integer. For any $s_0\in\mathbb{C}$, we have
\begin{align}
\label{eq32new}
\xi(s_0) \xi(s_0-a) - \Xi_{N,a}(s_0) = \frac{1}{2\pi i} \int_{\Re(s)=\sigma} &\bigg[ \frac{ \xi(s) \xi(s-a) -\Xi^\mrm{Euler}_{N,a}(s) }{s-s_0} \nonumber \\
&+ \frac{ \xi(s) \xi(s+a) -\Xi^\mrm{Euler}_{N,-a}(s) }{s-1+s_0} \bigg] ds,
\end{align}
where $\Xi_{N,a}$, $\Xi_{N,a}^\mrm{Euler}$ are given by Eqs. \eqref{hereeq28} and \eqref{eqXiEuler} respectively, and the line $\Re(s)=\sigma$ is to the right of points $s_0$, $1-s_0$.
\end{thm}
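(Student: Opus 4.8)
The plan is to evaluate the right-hand side by the residue theorem, after using the functional equation $\xi(s)=\xi(1-s)$ to fold the second integral onto a reflected vertical line. The structural facts I would record first are that $\xi(s)\xi(s-a)$ and $\xi(s)\xi(s+a)$ are entire, so that the only singularities of the two integrands are the explicit simple poles at $s=s_0$ and $s=1-s_0$ together with the poles of $\Xi^{\mrm{Euler}}_{N,a}$ and $\Xi^{\mrm{Euler}}_{N,-a}$ on $\mathfrak{O}_{N,a}$ and $\mathfrak{O}_{N,-a}$. I would then split $\Xi^{\mrm{Euler}}_{N,a}=\Xi^{\mrm{ingoing}}_{N,a}+\Xi^{\mrm{pp}}_{N,a}$: the point of this decomposition is that $\Xi^{\mrm{ingoing}}_{N,a}$ is entire, while $\Xi^{\mrm{pp}}_{N,a}$ carries every pole and decays like $1/(1+|s|)$ by Lemma~\ref{lemmaexercise}.

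In the second integrand I would substitute $s\mapsto 1-s$. Since $\xi(1-s)=\xi(s)$ and $\xi(1-s+a)=\xi(s-a)$, the numerator $\xi(s)\xi(s+a)$ turns into $\xi(s)\xi(s-a)$ and the pole at $1-s_0$ moves to $s_0$; the reflected term now lives on the line $\Re(s)=1-\sigma$, which by $\sigma>\max(\Re s_0,1-\Re s_0)$ lies to the left of $s_0$ while $\Re(s)=\sigma$ lies to its right. The two $\xi(s)\xi(s-a)$ pieces therefore differ only in contour orientation and combine into $\frac{1}{2\pi i}\bigl(\int_{\Re(s)=\sigma}-\int_{\Re(s)=1-\sigma}\bigr)\frac{\xi(s)\xi(s-a)}{s-s_0}\,ds$. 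Completing this to the boundary of the rectangle $1-\sigma\le\Re(s)\le\sigma$ with horizontal sides sent to $\pm i\infty$ — which vanish there because of the exponential Stirling decay of the Gamma factors on vertical lines — leaves exactly the residue at $s_0$, producing the main term $\xi(s_0)\xi(s_0-a)$.

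It remains to show that the Euler contributions assemble into $-\Xi_{N,a}(s_0)$. For the entire parts I would close one-sidedly: because $g(s)$, and hence $\Xi^{\mrm{ingoing}}_{N,a}(s)$, decays as $\Re(s)\to-\infty$, while $\Xi^{\mrm{ingoing}}_{N,-a}(1-s)$ decays as $\Re(s)\to+\infty$, I can close $\frac{1}{2\pi i}\int_{\Re(s)=\sigma}\frac{\Xi^{\mrm{ingoing}}_{N,a}(s)}{s-s_0}\,ds$ to the left and the reflected $\Xi^{\mrm{ingoing}}_{N,-a}$ integral to the right, picking up $\Xi^{\mrm{ingoing}}_{N,a}(s_0)$ and $\Xi^{\mrm{ingoing}}_{N,-a}(1-s_0)$, whose sum is $\Xi_{N,a}(s_0)$ by \eqref{hereeq28}. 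For the principal parts I would use Lemma~\ref{lemmaexercise} to close in either direction, together with the identity $\mathrm{Res}_{s_\star}\frac{\Xi^{\mrm{pp}}_{N,a}(s)}{s-s_0}=-\Xi^{\mrm{pp},s_\star}_{N,a}(s_0)$, to show that the principal-part residues telescope against $\Xi^{\mrm{pp}}_{N,a}(s_0)$ and make no net contribution.

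The main obstacle is controlling these contour manipulations against the factorial growth of the Gamma factors: both $\xi(s)\xi(s-a)$ and $\Xi^{\mrm{Euler}}_{N,a}(s)$ blow up as $\Re(s)\to+\infty$, so nothing may be closed on a full semicircle, and every closing must use decay of the correct one-sided or vertical type (Stirling decay in $\Im(s)$ for the rectangle sides, Gamma decay as $\Re(s)\to-\infty$ for the entire parts, and Lemma~\ref{lemmaexercise} for the principal parts). The genuinely delicate point, which I would have to carry out with care, is the bookkeeping of the principal-part residues: the rightmost poles of $\Xi^{\mrm{Euler}}_{N,a}$ lie on $\Re(s)=a$, and it is exactly the placement of $\Re(s)=\sigma$ relative to $\mathfrak{O}_{N,a}$ forced by the hypotheses on $\sigma$ that guarantees their cancellation. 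Verifying this cancellation, and the uniform decay estimates justifying each contour shift, is where the real work lies.
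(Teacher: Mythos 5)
Your proposal is correct and is essentially the paper's own proof, reorganized: you use the same decomposition $\Xi^{\mrm{Euler}}_{N,\pm a}=\Xi^{\mrm{ingoing}}_{N,\pm a}+\Xi^{\mrm{pp}}_{N,\pm a}$, the same appeal to Lemma~\ref{lemmaexercise} for the principal-part bookkeeping, and the same functional-equation folding that extracts $\xi(s_0)\xi(s_0-a)$ as the residue at $s_0$, the only difference being organizational — the paper first establishes the two standalone vertical-line representations \eqref{eq28} and \eqref{eq212} and then subtracts them, while you evaluate the right-hand side directly by closing each piece one-sidedly. The one imprecision is your assertion that the stated hypotheses on $\sigma$ force the cancellation of the residues at the poles on $\Re(s)=a$: being to the right of $s_0$ and $1-s_0$ does not by itself place the line to the right of $\mathfrak{O}_{N,a}$; however, the paper's proof makes the identical implicit assumption when it fixes the right sides of its rectangles at $\Re(s)=\sigma$ and asserts that all residues outside them vanish in the limit, so this is a caveat shared with the paper rather than a gap specific to your argument.
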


Eq. \eqref{eq32new} is an exact relation which constrains the size of the difference between the function and the approximation.

\begin{proof}
Let $\mathfrak{C}$ be a curve sparse with respect to $\mathfrak{O}_{N,a}$,  enclosing points $s_0$ and $1-s_0$, and consider a large rectangular contour $\mathcal{C}$, sparse with respect to $\mathfrak{O}_{N,a}$, enclosing curve  $\mathfrak{C}$. Applying Cauchy's residue theorem we have that
\be
\frac{1}{2\pi i}\oint_\mathcal{C} \lb \frac{\Xi^\mrm{pp}_{N,a}(s)}{s-s_0} +\frac{\Xi^\mrm{pp}_{N,-a}(s)}{s-1+s_0} \rb ds - \frac{1}{2\pi i}\oint_\mathfrak{C} \lb \frac{\Xi^\mrm{pp}_{N,a}(s)}{s-s_0} +\frac{\Xi^\mrm{pp}_{N,-a}(s)}{s-1+s_0} \rb ds = \sum_{\mathfrak{C} < \rho < \mathcal{C} } \mrm{Res} (\rho),
\ee
where the sum over residues is only over the poles between the two contours. From Lemma \ref{lemmaexercise} the integral on $\mathcal{C}$ decays to zero as the vertices of this contour are pushed to infinity, so we obtain 
\be
\label{eqisthis24}
\frac{1}{2\pi i}\oint_\mathfrak{C} \lb \frac{\Xi^\mrm{pp}_{N,a}(s)}{s-s_0} +\frac{\Xi^\mrm{pp}_{N,-a}(s)}{s-1+s_0} \rb ds  = - \sum_{\rho > \mathfrak{C}} \mrm{Res} (\rho),
\ee
where the sum over residues now runs over all the poles outside $\mathfrak{C}$.

Applying Cauchy's theorem, since $\Xi_{N,a}^\mrm{ingoing}(s)$ has no poles, each term in Eq. \eqref{hereeq28} can be written as a contour integral on $\mathfrak{C}$, and we have
\ba
\Xi_{N,a}(s_0) &=& \frac{1}{2\pi i} \oint_{\mathfrak{C}} \bigg( \frac{\Xi^\mrm{ingoing}_{N,a}(s)}{s-s_0} + \frac{\Xi^\mrm{ingoing}_{N,-a}(s)}{s-1+s_0} \bigg) ds\\
\label{eq26}
&=& \frac{1}{2\pi i} \oint_{\mathfrak{C}} \bigg( \frac{\Xi^\mrm{Euler}_{N,a}(s)}{s-s_0} + \frac{\Xi^\mrm{Euler}_{N,-a}(s)}{s-1+s_0} \bigg) ds \\
&-& \frac{1}{2\pi i} \oint_{\mathfrak{C}} \lb \frac{\Xi^\mrm{pp}_{N,a}(s)}{s-s_0} + \frac{\Xi^\mrm{pp}_{N,-a}(s)}{s-1+s_0} \rb ds. \nn
\ea
We can now apply Eq. \eqref{eqisthis24} in Eq. \eqref{eq26} to obtain
\ba
\Xi_{N,a}(s_0) &=& \frac{1}{2\pi i} \oint_{\mathfrak{C}} \bigg( \frac{\Xi^\mrm{Euler}_{N,a}(s)}{s-s_0} + \frac{\Xi^\mrm{Euler}_{N,-a}(s)}{s-1+s_0} \bigg) ds + \sum_{\rho > \mathfrak{C}} \mrm{Res} (\rho). \nn
\ea
The sum over residues outside $\mathfrak{C}$ vanishes as $\mathfrak{C}$ is pushed to infinity, due to the rapid decay of the Gamma function in the imaginary direction, and of its derivative at negative even integers in the negative real direction. In this limit Eq. \eqref{eq26} becomes % residue at negative even integers is -\frac{2 (2 k+1) (-\pi )^k}{(k-1)! \left(p^{2 k}-1\right)}
\be
\label{eq27}
\Xi_{N,a}(s_0) = \frac{1}{2\pi i} \oint_{\mathfrak{C}} \bigg( \frac{\Xi^\mrm{Euler}_{N,a}(s)}{s-s_0} + \frac{\Xi^\mrm{Euler}_{N,-a}(s)}{s-1+s_0} \bigg) ds.
\ee
Consider now a family $\{\mathfrak{C}_l\}_{l\geq 1}$ of rectangles, with all the right sides at the same real part $\Re(s)=\sigma$. Furthermore, take the rectangles so that the right vertices get pushed to positive and negative infinity in the imaginary direction as $l$ is increased, and such that the left vertices get pushed to infinity in the imaginary and negative real directions. We keep the real part of the left side at an odd negative integer $m_l$ for all $l$, such that $|m_l|$ increases as $l$ is increased. Let $\mathfrak{C}=\mathfrak{C}_l$. There are four contributions to the the contour integral in Eq. \eqref{eq27}, coming from the four sides of the rectangle. The integrals on the horizontal sides vanish in the limit $l\to\infty$, due to the rapid decay of the Gamma function at large absolute value of the imaginary part. The contribution to the integral in Eq. \eqref{eq27} from the left vertical side at $m_l$ also vanishes in the limit $l\to\infty$, because $m_l$ is a odd negative integer of increasing absolute value. Thus, only the right vertical line contributes to Eq. \eqref{eq27} in the limit $l\to\infty$, and we arrive at %remember here that the contour passes midpoint between the negative even poles and this decay holds only depending on this property
\be
\label{eq28}
\Xi_{N,a}(s_0) = \frac{1}{2\pi i} \int_{\Re(s)=\sigma} \bigg( \frac{\Xi^\mrm{Euler}_{N,a}(s)}{s-s_0} + \frac{\Xi^\mrm{Euler}_{N,-a}(s)}{s-1+s_0} \bigg) ds.
\ee
Consider now another rectangular contour $\mathfrak{C}'$ enclosing points $s_0$ and $1-s_0$, such that the left and right vertical sides are at real parts $1-\sigma$ and $\sigma$ respectively, i.e. the contour is symmetric around the critical line. From the functional equation we have
\be
\xi(s)\xi(s-a) = \xi(1-s)\xi(1-s+a)
\ee 
so that by Cauchy's theorem we can write $\xi(s_0)\xi(s_0-a)$ as a contour integral,
\ba
\label{eq29}
\xi(s_0)\xi(s_0-a) &=& \frac{1}{2}\lsb \xi(s_0)\xi(s_0-a) + \xi(1-s_0)\xi(1-s_0+a) \rsb \\
&=& \frac{1}{4\pi i} \oint_{\mathfrak{C}'} \lsb \frac{\xi(s)\xi(s-a)}{s-s_0} + \frac{\xi(s)\xi(s+a)}{s-1+s_0} \rsb ds.
\ea
We split the contour $\mathfrak{C}'$ into parts $\mathfrak{C}'_\mathcal{L}$ and $\mathfrak{C}_\mathcal{R}'$ to the left and right of the critical line. For the left part, using the change of variables $s\to 1-s$ and the functional equation, we have
\ba
& &\frac{1}{4\pi i} \int_{\mathfrak{C}'_\mathcal{L}} \lsb \frac{\xi(s)\xi(s-a)}{s-s_0} + \frac{\xi(s)\xi(s+a)}{s-1+s_0} \rsb ds \\
&=& - \frac{1}{4\pi i} \int_{\mathfrak{C}'_\mathcal{R}} \lsb  \frac{\xi(1-s)\xi(1-s-a)}{1-s-s_0} + \frac{\xi(1-s)\xi(1-s+a)}{-s+s_0} \rsb ds \nn\\
\label{eq210}
&=& \frac{1}{4\pi i} \int_{\mathfrak{C}'_\mathcal{R}} \lsb \frac{\xi(s)\xi(s+a)}{s-1+s_0} + \frac{\xi(s)\xi(s-a)}{s-s_0} \rsb ds.
\ea
Then Eq. \eqref{eq29} becomes
\be
\xi(s_0)\xi(s_0-a) = \frac{1}{2\pi i} \int_{\mathfrak{C}'_\mathcal{R}} \lsb \frac{\xi(s)\xi(s-a)}{s-s_0} + \frac{\xi(s)\xi(s+a)}{s-1+s_0} \rsb ds.
\ee
Due to the rapid decay of the Gamma function at large absolute value of the imaginary part, the contribution of the horizontal sides in $\mathfrak{C}'_\mathcal{R}$ vanishes in the limit where the vertices of the contour are taken to have large imaginary parts in absolute value. Then we are left with the contribution of  the vertical side to the integral, and  we obtain
\ba
\label{eq212}
\xi(s_0)\xi(s_0-a) =\frac{1}{2\pi i} \int_{\Re(s)=\sigma}\lsb \frac{\xi(s)\xi(s-a)}{s-s_0} + \frac{\xi(s)\xi(s+a)}{s-1+s_0} \rsb ds.
\ea
Now we take the difference of Eqs. \eqref{eq212} and \eqref{eq28} to arrive at
\begin{align}
\xi(s_0)\xi(s_0-a) - \Xi_{N,a}(s_0) = \frac{1}{2\pi i} \int_{\Re(s)=\sigma} &\bigg[ \frac{\xi(s)\xi(s-a) - \Xi^\mrm{Euler}_{N,a}(s) }{s-s_0} \nonumber \\
&+ \frac{ \xi(s)\xi(s+a) - \Xi^\mrm{Euler}_{N,-a}(s)}{s-1+s_0} \bigg] ds,
\end{align}
which is what we wanted to prove.
\end{proof}

\begin{remark}
We remark in passing that all the steps above precisely apply to the zeta function as well. In that case, we obtain
\be
\xi(s_0) - \Xi_N(s_0) = \frac{1}{2\pi i} \int_{\Re(s)=\sigma}\lsb \xi(s) - \Xi^\mrm{Euler}_{N}(s) \rsb\lb  \frac{1}{s-s_0} + \frac{1}{s-1+s_0}\rb ds.
\ee
\end{remark}

\begin{remark}
For $a$ an odd integer we have the Legendre duplication formula
\be
\label{eq319dupformula}
g(s) g(s-a) = 2^{(a-1)/2-s}\pi^{(a+1)/2-s} (s-a)(s-a-1)\Gamma\lb s+1 \rb  \prod^{\frac{a-1}{2}}_{j=1} (s-2j-1)^{-1} ,
\ee
where the product is understood to be $1$ when $a=1$. The poles of $g(s) g(s-a)$ thus are at all negative integers and at the odd positive integers $l$, with $3\leq l\leq a-1$.
\end{remark}

A computer algebra software yields the following two lemmas.

\begin{lemma}
\label{lemmamellins}
For $s_0\in\mathbb{C}$ and $\sigma>\Re(s_0)$, $\sigma>3$, we have the inverse Mellin transforms
\ba
 \frac{J_1(N,s_0)}{\pi} &\coloneqq& \frac{1}{2\pi i} \int_{\Re(s)=\sigma} \frac{(s-2)(s-1)\Gamma(s+1)}{(s-s_0) (2\pi n)^s} ds\nn\\
  &=& \frac{1}{(2\pi n)^{s_0}} \Big[ 6 \Gamma (s_0+1,2 n \pi ) -6 \Gamma(s_0+2,2 n \pi )+\Gamma (s_0+3,2n \pi )\Big],  \\ 
\frac{J_3(N,s_0)}{2\pi^2} &\coloneqq& \frac{1}{2\pi i} \int_{\Re(s)=\sigma} \frac{(s-4)\Gamma(s+1)}{(s-s_0) (2\pi n)^s} ds \nn \\ 
&=& 2 \pi n \left[(s_0-4) E_{-s_0}(2\pi n )+e^{-2 \pi n }\right],\nn \\
\frac{J_5(N,s_0)}{4\pi^3} &\coloneqq& \frac{1}{2\pi i} \int_{\Re(s)=\sigma} \frac{(s-6)\Gamma(s+1)}{(s-3)(s-s_0) (2\pi n)^s} ds\nn \\ 
&=& \frac{(2\pi n)^4 (s_0-6) E_{-s_0}(2 \pi n )+12\pi n e^{-2 \pi  n} [\pi  n (2 \pi n+3)+3]+18 e^{-2 \pi  n}}{(2\pi n)^3(s_0-3)}. \nn
\ea
\end{lemma}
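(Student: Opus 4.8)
The three displayed formulas are all instances of a single inverse Mellin transform, and the plan is to isolate that transform, prove it once, and then reduce each $J_i$ to it by elementary manipulations of the Gamma function. The central identity I would establish is
\[
\frac{1}{2\pi i}\int_{\Re(s)=\sigma}\frac{\Gamma(s+k)}{s-c}\,x^{-s}\,ds = x^{-c}\,\Gamma(c+k,x),
\]
valid for $x>0$, integer $k\ge 0$, and $\sigma>\max(0,\Re(c))$, where $\Gamma(\cdot,\cdot)$ is the upper incomplete Gamma function. To prove it I would compute the Mellin transform of the right-hand side: integrating by parts and using $\tfrac{d}{dx}\Gamma(a,x) = -x^{a-1}e^{-x}$ gives $\int_0^\infty x^{u-1}\Gamma(a,x)\,dx = \Gamma(a+u)/u$ whenever $\Re(u)>0$ and $\Re(a+u)>0$, the boundary terms vanishing at both ends under these conditions. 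Taking $a=c+k$ and $u=w-c$ shows that $x^{-c}\Gamma(c+k,x)$ has Mellin transform $\Gamma(w+k)/(w-c)$ on $\Re(w)>\max(0,\Re(c))$, and Mellin inversion then yields the identity. Absolute convergence of the vertical integral is immediate from the decay of order $e^{-\pi|t|/2}$ of $|\Gamma(\sigma+k+it)|$ as $|t|\to\infty$.

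Next I would reduce each integrand to a combination of the kernels $\Gamma(s+k)/(s-c)$. Using the recurrence $\Gamma(s+j+1)=(s+j)\Gamma(s+j)$ repeatedly, the polynomial prefactors split cleanly:
\[
\begin{aligned}
(s-2)(s-1)\Gamma(s+1) &= \Gamma(s+3) - 6\,\Gamma(s+2) + 6\,\Gamma(s+1),\\
(s-4)\Gamma(s+1) &= \Gamma(s+2) - 5\,\Gamma(s+1).
\end{aligned}
\]
For $J_5$ there is the extra finite-place pole at $s=3$, so I would first apply partial fractions,
\[
\frac{s-6}{(s-3)(s-s_0)} = \frac{1}{s_0-3}\left[\frac{3}{s-3} + \frac{s_0-6}{s-s_0}\right],
\]
keeping the factor $\Gamma(s+1)$ intact. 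Applying the central identity termwise (with $c=s_0$, and with $c=3$ for the $J_5$ piece, which is why the hypothesis $\sigma>3$ is needed there) produces
\[
\frac{J_1}{\pi} = (2\pi n)^{-s_0}\bigl[\Gamma(s_0+3,2\pi n)-6\,\Gamma(s_0+2,2\pi n)+6\,\Gamma(s_0+1,2\pi n)\bigr],
\]
which is already the claimed closed form, together with the analogous expressions $\tfrac{J_3}{2\pi^2}=(2\pi n)^{-s_0}[\Gamma(s_0+2,2\pi n)-5\Gamma(s_0+1,2\pi n)]$ and $\tfrac{J_5}{4\pi^3}=\tfrac{3}{s_0-3}(2\pi n)^{-3}\Gamma(4,2\pi n)+\tfrac{s_0-6}{s_0-3}(2\pi n)^{-s_0}\Gamma(s_0+1,2\pi n)$.

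Finally I would translate into the exponential-integral and elementary form of the stated answers using $E_{-s_0}(z)=z^{-s_0-1}\Gamma(s_0+1,z)$, the recurrence $\Gamma(a+1,z)=a\,\Gamma(a,z)+z^a e^{-z}$, and the finite expansion $\Gamma(4,z)=e^{-z}(z^3+3z^2+6z+6)$. For $J_3$ this turns $\Gamma(s_0+2,2\pi n)-5\Gamma(s_0+1,2\pi n)$ into $(2\pi n)^{s_0+1}[(s_0-4)E_{-s_0}(2\pi n)+e^{-2\pi n}]$, matching the stated result after multiplying by $(2\pi n)^{-s_0}$; for $J_5$ the same substitutions reproduce the displayed rational-plus-$E_{-s_0}$ expression, the coefficient of $e^{-2\pi n}$ in the numerator being $3(z^3+3z^2+6z+6)=3z^3+9z^2+18z+18$ with $z=2\pi n$, which is exactly $12\pi n[\pi n(2\pi n+3)+3]+18$.

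The computations are entirely routine once the central identity is in hand, so I do not expect a genuine obstacle; the only point requiring care is bookkeeping. Specifically, the exact hypotheses $\sigma>\Re(s_0)$ and $\sigma>3$ are dictated by the requirement that the contour lie to the right of every pole $s_0$ and $s=3$ and by the $\Re(u)>0$ condition in the integration-by-parts step, and one must check that the Mellin inversion contour may be placed on $\Re(s)=\sigma$ (which again follows from the exponential vertical decay of $\Gamma$). Verifying that the partial-fraction constants and the Gamma-recurrence coefficients combine into precisely the stated numerators is the most error-prone part, but it is mechanical.
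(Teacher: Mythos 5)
Your proposal is correct, and it takes a genuinely different route from the paper for the simple reason that the paper offers no proof at all: its only justification for this lemma (and the next) is the sentence ``A computer algebra software yields the following two lemmas.'' Your argument supplies the missing derivation, and its key steps check out. The Mellin pair $\int_0^\infty x^{u-1}\Gamma(a,x)\,dx=\Gamma(a+u)/u$ (integration by parts with $\tfrac{d}{dx}\Gamma(a,x)=-x^{a-1}e^{-x}$, boundary terms vanishing for $\Re(u)>0$, $\Re(a+u)>0$) does yield your central identity after inversion, which is legitimate here because of the exponential vertical decay of $\Gamma$. The recurrence decompositions are exact: $(s-2)(s-1)\Gamma(s+1)=\Gamma(s+3)-6\Gamma(s+2)+6\Gamma(s+1)$ since $(s+2)(s+1)-6(s+1)+6=s^2-3s+2$, and $(s-4)\Gamma(s+1)=\Gamma(s+2)-5\Gamma(s+1)$; the partial fraction $\tfrac{s-6}{(s-3)(s-s_0)}=\tfrac{1}{s_0-3}\bigl[\tfrac{3}{s-3}+\tfrac{s_0-6}{s-s_0}\bigr]$ is right; and the conversions via $E_{-s_0}(z)=z^{-s_0-1}\Gamma(s_0+1,z)$, $\Gamma(a+1,z)=a\Gamma(a,z)+z^ae^{-z}$ and $\Gamma(4,z)=e^{-z}(z^3+3z^2+6z+6)$ reproduce the stated right-hand sides exactly: for $J_3$, $\Gamma(s_0+2,z)-5\Gamma(s_0+1,z)=(s_0-4)\Gamma(s_0+1,z)+z^{s_0+1}e^{-z}$ gives $z[(s_0-4)E_{-s_0}(z)+e^{-z}]$, and for $J_5$ the coefficient $3(z^3+3z^2+6z+6)=3z^3+9z^2+18z+18$ matches $12\pi n[\pi n(2\pi n+3)+3]+18$ at $z=2\pi n$. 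What your route buys is a rigorous, CAS-free proof whose hypotheses become transparent: $\sigma>\Re(s_0)$ is forced by the pole at $s_0$ (equivalently the $\Re(u)>0$ condition), while $\sigma>3$ is genuinely needed only for $J_5$, where the extra pole at $s=3$ comes from $g(s)g(s-5)$; moreover your single identity is a template that mechanically handles every odd $a$, which is relevant to the paper's later conjectures and problems for $a\geq 7$. What the paper's route buys is only brevity.
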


Here $\Gamma(s,a)$ is the incomplete Gamma function
\be
\Gamma(s,a) = \int_a^\infty t^{a-1} e^{-t} dt
\ee
and $E_n(z)$ is the exponential integral function
\be
E_n(z) = \int_1^\infty \frac{e^{-zt}}{t^n} dt.
\ee

\begin{lemma}
\label{lemmma3p3}
From Lemma \ref{lemmamellins}, for $s_0\in\mathbb{C}$, $\sigma>\Re(s_0)$, and $n \gg |s_0|$, we have the following asymptotic results
\begin{align*}
	\frac{1}{2\pi i} \int_{\Re(s)=\sigma} \frac{(s-2)(s-1)\Gamma(s+1)}{(s-s_0) (2\pi n)^s} ds &= (2\pi n)^2 e^{-2\pi n} \nonumber \\
	& \times \lsb 1 + \frac{s_0-4}{2\pi n} + \frac{(s_0-1)(s_0-2)}{(2\pi n)^2} + \mathcal{O} \lb \frac{(1+|s_0|)^3}{n^3} \rb \rsb, \nonumber \\
	\frac{1}{2\pi i} \int_{\Re(s)=\sigma} \frac{(s-4)\Gamma(s+1)}{(s-s_0) (2\pi n)^s} ds &= (2\pi n) e^{-2\pi n} \nonumber \\
	&\times \lsb 1 + \frac{s_0-4}{2\pi n} + \frac{s_0(s_0-4)}{(2\pi n)^2} + \mathcal{O} \lb \frac{(1+|s_0|)^3}{n^3} \rb \rsb, \nonumber \\
	\frac{1}{2\pi i} \int_{\Re(s)=\sigma} \frac{(s-6)\Gamma(s+1)}{(s-3)(s-s_0) (2\pi n)^s} ds &= e^{-2\pi n} \nonumber \\
	&\times \lsb 1 + \frac{s_0-3}{2\pi n} + \frac{s_0(s_0-4)-6}{(2\pi n )^2} + \mathcal{O} \lb \frac{(1+|s_0|)^3}{n^3} \rb \rsb. \nonumber
\end{align*}
\end{lemma}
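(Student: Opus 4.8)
The plan is to substitute the closed forms of Lemma~\ref{lemmamellins} into the classical large-argument asymptotics of the incomplete Gamma function and then collect powers of $2\pi n$. The single analytic input is the identity obtained by integrating $\int_x^\infty t^{a-1}e^{-t}\,dt$ by parts three times, which in exact (non-truncated) form reads
\[
\Gamma(a,x) = x^{a-1}e^{-x}\left[1 + \frac{a-1}{x} + \frac{(a-1)(a-2)}{x^2}\right] + (a-1)(a-2)(a-3)\,\Gamma(a-3,x).
\]
The two special functions appearing in Lemma~\ref{lemmamellins} are the same object up to elementary factors: the change of variables $u=zt$ gives $E_\nu(z) = z^{\nu-1}\Gamma(1-\nu,z)$, so in particular $E_{-s_0}(2\pi n) = (2\pi n)^{-s_0-1}\Gamma(s_0+1,2\pi n)$, and all three integrals are thereby expressed through $\Gamma(s_0+j,2\pi n)$ with $j\in\{1,2,3\}$.

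For the first integral I would insert the three explicit terms for $\Gamma(s_0+1,\cdot)$, $\Gamma(s_0+2,\cdot)$ and $\Gamma(s_0+3,\cdot)$ into the combination $6\Gamma(s_0+1,\cdot)-6\Gamma(s_0+2,\cdot)+\Gamma(s_0+3,\cdot)$, divide by $(2\pi n)^{s_0}$, and sort the result by powers of $x=2\pi n$. The coefficient of $x^2$ is $1$, that of $x^1$ is $(s_0+2)-6=s_0-4$, and that of $x^0$ is $(s_0+2)(s_0+1)-6(s_0+1)+6=(s_0-1)(s_0-2)$, which reproduces the claimed bracket after $(2\pi n)^2 e^{-2\pi n}$ is factored out. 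For the second and third integrals I would use $E_{-s_0}(2\pi n) = (2\pi n)^{-1}e^{-2\pi n}[1 + s_0/(2\pi n) + s_0(s_0-1)/(2\pi n)^2 + \cdots]$ and substitute into the corresponding closed forms; the same bookkeeping yields the coefficients $s_0(s_0-4)$ and $s_0(s_0-4)-6$ at order $x^{-2}$. In the third case one additionally divides the collected numerator by $s_0-3$, and the only genuinely algebraic step is checking that $s_0-3$ divides the $x^1$ coefficient $s_0^3-7s_0^2+6s_0+18$, with quotient $s_0^2-4s_0-6=s_0(s_0-4)-6$.

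The main obstacle is not this bookkeeping but the claim that the error is $\mathcal{O}((1+|s_0|)^3/n^3)$ \emph{uniformly} in $s_0$ on the region $n\gg|s_0|$. Since the incomplete Gamma series is merely asymptotic, I would avoid any formal truncation and instead bound the exact remainder $(a-1)(a-2)(a-3)\,\Gamma(a-3,x)$ produced by the identity above. The elementary estimate $|\Gamma(a-3,x)| \le \int_x^\infty t^{\Re(a)-4}e^{-t}\,dt \ll x^{\Re(a)-4}e^{-x}$ holds as soon as $x=2\pi n$ exceeds a fixed multiple of $|a|=|s_0+\mathcal{O}(1)|$, and after restoring the elementary prefactors this turns the remainder into $\mathcal{O}(|s_0|^3/n^3)$, which is precisely where the hypothesis $n\gg|s_0|$ enters. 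The one point needing real care is the third integral, whose closed form carries a factor $(s_0-3)^{-1}$: the apparent pole at $s_0=3$ is removable because the numerator vanishes identically there (a short computation confirms the cancellation), so to keep the bound uniform near $s_0=3$ I would either show that the assembled remainder inherits the compensating factor $s_0-3$, or treat a small disc about $s_0=3$ separately.
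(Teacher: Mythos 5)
Your proposal is correct, and it supplies something the paper itself does not contain: the paper gives no argument for this lemma beyond the sentence ``A computer algebra software yields the following two lemmas,'' so the expansion is simply asserted as a CAS output. Your route is the natural rigorous version of that computation: the reduction $E_{-s_0}(2\pi n)=(2\pi n)^{-s_0-1}\Gamma(s_0+1,2\pi n)$ puts all three closed forms of Lemma \ref{lemmamellins} under a single special function, and your coefficient bookkeeping checks out in all three cases --- $(s_0+2)(s_0+1)-6(s_0+1)+6=(s_0-1)(s_0-2)$ for $a=1$, the coefficient $s_0(s_0-4)$ for $a=3$, and the factorization $s_0^3-7s_0^2+6s_0+18=(s_0-3)\left(s_0(s_0-4)-6\right)$ for $a=5$. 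The two points where your write-up genuinely goes beyond what the paper records are exactly the points that need care: (i) using the \emph{exact} three-step integration-by-parts identity with remainder $(a-1)(a-2)(a-3)\Gamma(a-3,x)$, bounded by $\ll(1+|a|)^3x^{\Re(a)-4}e^{-x}$ once $x$ exceeds a fixed multiple of $|a|$, which is the only way to get an error term uniform in $s_0$ (a formal asymptotic series gives no such uniformity) and which shows precisely where the hypothesis $n\gg|s_0|$ enters; and (ii) the removable singularity at $s_0=3$ in the third formula, where your observation is right that the numerator of Lemma \ref{lemmamellins} vanishes identically at $s_0=3$ while the three retained terms each carry an explicit factor $s_0-3$, so the quotient remainder is analytic in $s_0$ and a maximum-modulus argument on the disc $|s_0-3|\le 1$ (or your alternative of exhibiting the factor $s_0-3$ in the assembled remainder) closes the uniformity near that point. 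In short: same mechanism the paper implicitly relies on, but with the remainder control and the $s_0=3$ analysis that the paper delegates to software.
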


For a prime $p$, let $\{ \leq p \}$ denote the union of the infinite set of positive integers that have all prime factors less than or equal to $p$, and $\{1\}$. Lemmas \ref{lemmamellins}, \ref{lemmma3p3} above can be used to prove the following result.

\begin{lemma} 
\label{lemmma3p4}
Let $\sigma_a(n)\coloneqq \sum_{d|n} d^a$ be the a-th divisor function. For integer $N\geq 1$, $s_0\in\mathbb{C}$ and $a=1,3,5$, we have
\be
\xi(s_0)\xi(s_0-a) - \Xi_{N,a}(s_0) =  \sum_{\substack{ n\geq p_{N+1} \\ n \notin \{ \leq p_N \} }} \sigma_a(n) \lsb  J_a(N,s_0) + J_a(N,a+1-s_0) \rsb,
\ee
where the functions $J_a(N,s_0)$ are given in Lemma \ref{lemmamellins}.
\end{lemma}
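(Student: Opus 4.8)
The plan is to start from the exact integral formula \eqref{eq32new} of the Theorem, specialized to $a=1,3,5$, and to evaluate its two vertical-line integrals by turning each integrand into a Dirichlet series. Writing $\xi(s)=g(s)\zeta(s)$ and using the generating identity \eqref{sigmaangenerating} together with the observation that the truncated product in \eqref{eqXiEuler} is precisely the Dirichlet series of $\sigma_a$ restricted to integers all of whose prime factors are at most $p_N$, the first numerator factors as
\[
\xi(s)\xi(s-a) - \Xi^\mrm{Euler}_{N,a}(s) = g(s)g(s-a)\sum_{\substack{n\ge p_{N+1}\\ n\notin\{\leq p_N\}}}\frac{\sigma_a(n)}{n^s}.
\]
For the second numerator I would invoke the elementary identity $\sigma_{-a}(n)=n^{-a}\sigma_a(n)$, which converts $\zeta(s)\zeta(s+a)=\sum_n \sigma_{-a}(n)n^{-s}$ into $\sum_n \sigma_a(n)\,n^{-(s+a)}$, yielding the analogous expression $\xi(s)\xi(s+a)-\Xi^\mrm{Euler}_{N,-a}(s)=g(s)g(s+a)\sum_{n\notin\{\leq p_N\}}\sigma_a(n)\,n^{-(s+a)}$.

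Next I would push the summation through the integral. To legitimize this I would first take $\sigma>\max(\Re(s_0),1-\Re(s_0),a+1)$; this is always possible and still places the line to the right of $s_0$ and $1-s_0$, so \eqref{eq32new} continues to hold, while on $\Re(s)=\sigma$ both Dirichlet series now converge absolutely (the threshold $\Re(s)>a+1$ being the reason an enlarged $\sigma$ is needed). Combined with the exponential decay of the Gamma factors in $g(s)$ along vertical lines, this gives absolute convergence of the resulting double sum/integral and justifies Fubini. I would emphasize that the final answer is independent of this auxiliary choice of $\sigma$, since the left-hand side of \eqref{eq32new} does not depend on $\sigma$ and the closed forms for $J_a$ in Lemma \ref{lemmamellins} are themselves $\sigma$-independent.

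After interchanging, the first term becomes $\sum_n \sigma_a(n)\cdot\frac{1}{2\pi i}\int_{\Re(s)=\sigma}\frac{g(s)g(s-a)}{(s-s_0)\,n^s}\,ds$. Using the Legendre duplication formula \eqref{eq319dupformula} to write $g(s)g(s-a)$ explicitly, for each of $a=1,3,5$ this inner integral coincides exactly with the Mellin transform $J_a(N,s_0)$ of Lemma \ref{lemmamellins} (for the summand index $n$), once the prefactors $2^{(a-1)/2}\pi^{(a+1)/2}$ are matched against the normalizations $\pi$, $2\pi^2$, $4\pi^3$.

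The remaining and most delicate step is to identify the second inner integral $\frac{1}{2\pi i}\int_{\Re(s)=\sigma}\frac{g(s)g(s+a)}{(s-1+s_0)\,n^{s+a}}\,ds$ with $J_a(N,a+1-s_0)$. Here I would apply the change of variables $u=s+a$, after which the integrand becomes $\frac{g(u)g(u-a)}{(u-(a+1-s_0))\,n^u}$ on the shifted line $\Re(u)=\sigma+a$, reducing it to exactly the $g(u)g(u-a)$ form already analyzed. Since $\sigma+a$ lies to the right of every pole of $g(u)g(u-a)$ and to the right of $u=a+1-s_0$ — this last inequality is where $\sigma>1-\Re(s_0)$ is used — the shifted contour is admissible and computes the same value as the defining contour for $J_a$, namely $J_a(N,a+1-s_0)$. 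Adding the two contributions yields the claimed identity. I expect the main obstacles to be the convergence bookkeeping in the Fubini step (reconciling the need for $\sigma>a+1$ with the $\sigma$-independence of both sides) and the careful tracking of the duplication-formula constants; the admissibility of the shifted contour and the convergence of the final sum are then comparatively routine, the latter following from the exponential decay in Lemma \ref{lemmma3p3}.
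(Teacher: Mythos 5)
Your proposal is correct and follows essentially the same route as the paper's proof: expand both numerators of Eq.~\eqref{eq32new} as Dirichlet series, use $\sigma_{-a}(n)n^{a}=\sigma_a(n)$ together with the shift $s\to s+a$ to turn the second integral into the $g(s)g(s-a)$ form with pole at $a+1-s_0$, and then invoke the duplication formula and Lemma~\ref{lemmamellins} to identify the inner integrals with $J_a(N,s_0)$ and $J_a(N,a+1-s_0)$. If anything, your treatment of the Fubini step is more careful than the paper's, which works with ``$\sigma>1$'' even though absolute convergence of $\sum_n\sigma_a(n)n^{-s}$ requires $\Re(s)>a+1$ for $a=3,5$; your enlarged choice of $\sigma$ and the observation that both sides are $\sigma$-independent repair this cleanly.
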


\begin{proof}
We have
\ba
\xi(s) \xi(s-a) &=& g(s) g(s-a) \sum_{n=1}^\infty \frac{\sigma_a(n)}{n^s}, \\
\Xi^\mrm{Euler}_{N,a}(s) &=& g(s)g(s-a) \prod_{p=2}^{p_{N}} \frac{1}{1-p^{-s}} \frac{1}{1-p^{-(s-a)}} \\ \nonumber
 &=& g(s) g(s-a) \sum_{n \in \{ \leq p_N \}} \frac{\sigma_a(n)}{n^s}.  
\ea
We consider the difference in Eq. \eqref{eq32new} for $\sigma>1$,
\ba
\nn \xi(s_0)\xi(s_0-a) - \Xi_{N,a}(s_0) &=& \frac{1}{2\pi i} \int_{\Re(s)=\sigma} \frac{g(s)g(s-a) }{s-s_0} \bigg( \sum_{n=1}^\infty - \sum_{n \in \{ \leq p_N \}} \bigg) \frac{\sigma_a(n)}{n^s} ds \\
&+& \frac{1}{2\pi i} \int_{\Re(s)=\sigma} \frac{ g(s)g(s+a)}{s-1+s_0} \bigg( \sum_{n=1}^\infty - \sum_{n \in \{ \leq p_N \}} \bigg) \frac{\sigma_{-a}(n)}{n^s} ds .
\label{eq323}
\ea
We denote the second integral above by $I_2(N)$. Performing a change of variables $s'\coloneqq s + a$ and using $\sigma_{-a}(n)n^a = \sigma_a(n)$,  we have
\ba
I_2(N) &=& \frac{1}{2\pi i} \int_{\Re(s)=\sigma} \frac{ g(s)g(s+a)}{s-1+s_0} \bigg( \sum_{n=1}^\infty - \sum_{n \in \{ \leq p_N \}} \bigg) \frac{\sigma_{-a}(n)}{n^s} ds \nonumber \\
&=&  \frac{1}{2\pi i} \int_{\Re(s')=\sigma+a} \frac{ g(s'-a)g(s')}{s'-a-1+s_0} \bigg( \sum_{n=1}^\infty - \sum_{n \in \{ \leq p_N \}} \bigg) \frac{\sigma_{a}(n)}{n^{s'}} ds',
\ea
so that Eq. \eqref{eq323} becomes
\ba
\nn \xi(s_0)\xi(s_0-a) - \Xi_{N,a}(s_0) &=& \frac{1}{2\pi i} \int_{\Re(s)=\sigma} \frac{g(s)g(s-a) }{s-s_0} \bigg( \sum_{n=1}^\infty - \sum_{n \in \{ \leq p_N \}} \bigg) \frac{\sigma_a(n)}{n^s} ds \\
&+& \frac{1}{2\pi i} \int_{\Re(s)=\sigma+a} \frac{ g(s)g(s-a)}{s-a-1+s_0} \bigg( \sum_{n=1}^\infty - \sum_{n \in \{ \leq p_N \}} \bigg) \frac{\sigma_{a}(n)}{n^s} ds .
\label{eq326}
\ea
From the duplication formula \eqref{eq319dupformula} for $a=1,3,5$ we have
\ba
g(s) g(s-1) &=& \pi \frac{ (s-2) (s-1) \Gamma (s+1)}{ \lb 2\pi\rb^{s} }, \\
g(s) g(s-3) &=& 2 \pi^2 \frac{(s-4) \Gamma (s+1)}{ \lb 2\pi\rb^{s} }, \\
g(s) g(s-5) &=& 4\pi^3 \frac{ (s-6) \Gamma(s+1) }{ \lb 2\pi\rb^{s} \lb s-3 \rb}. 
\ea
Using Lemma \ref{lemmamellins} and Eq. \eqref{eq326}, and recalling the notation
\ba
J_1(N,s_0) &=&  \frac{\pi}{(2\pi n)^{s_0}} \big[ 6 \Gamma (s_0+1,2 n \pi ) -6 \Gamma(s_0+2,2 n \pi )+\Gamma (s_0+3,2n \pi )\big], \nn \\
J_3(N,s_0) &=& 4 \pi^3 n \left[(s_0-4) E_{-s_0}(2\pi n )+e^{-2 \pi n }\right], \nn \\
J_5(N,s_0) &=& 4\pi^3 \frac{(2\pi n)^4 (s_0-6) E_{-s_0}(2 \pi n )+12\pi n e^{-2 \pi  n} [\pi  n (2 \pi n+3)+3]+18 e^{-2 \pi  n}}{(2\pi n)^3(s_0-3)}, \nn
\ea
we thus have, for $a=1,3,5$, 
\ba
& & \xi(s_0)\xi(s_0-a) - \Xi_{N,a}(s_0) =  \bigg( \sum_{n=1}^\infty - \sum_{n \in \{ \leq p_N \}} \bigg) \sigma_a(n) \lsb  J_a(N,s_0) + J_a(N,a+1-s_0) \rsb.  \nn 
\ea
We note that the contributions of all the integers $n$ less that $p_{N+1}$ cancel between the two sums, and therefore we obtain
\be
\sum_{n=1}^\infty - \sum_{n \in \{ \leq p_N \}} = \sum_{\substack{ n\geq p_{N+1} \\ n \notin \{ \leq p_N \} }},
\ee
which leads to the desired result.
\end{proof}

Lemmas \ref{lemmma3p3} and \ref{lemmma3p4} immediately lead to the following corollary. 

\begin{cor}
\label{corr1}
In the large $|s_0|/p_{N+1}$ limit, for $a=1$ we have
\begin{align}
	\xi(s_0)\xi(s_0-1) - \Xi_{N,1}(s_0) &= 2 \pi \sum_{\substack{ n\geq p_{N+1} \\ n \notin \{ \leq p_N \} }} \sigma_1(n) \lb 2\pi n \rb^2 e^{-2\pi n} \nonumber \\
	&\quad \times \bigg[ 1 - \frac{ 3 }{ 2\pi n} + \frac{(s_0-1)^2}{(2 \pi n)^2} + \mathcal{O} \bigg( \frac{\lb 1 + |s_0| \rb^3}{n^3} \bigg) \bigg],
\end{align}
for $a=3$ we have
\begin{align}
\xi(s_0)\xi(s_0-3) - \Xi_{N,3}(s_0) &= 4\pi^2 \sum_{\substack{ n\geq p_{N+1} \\ n \notin \{ \leq p_N \} }} \sigma_3(n) (2\pi n)e^{-2\pi n} \nn \\
& \quad \times \bigg[ 1 - \frac{2}{ 2\pi n} + \frac{s_0(s_0-4)}{ \lb 2 \pi  n\rb^2} + \mathcal{O}\bigg( \frac{\lb 1 + |s_0| \rb^3}{n^3} \bigg) \bigg],
\end{align}
and lastly for $a=5$ we have
\begin{align}
	\xi(s_0)\xi(s_0-5) - \Xi_{N,5}(s_0) &= 8\pi^3 \sum_{\substack{ n\geq p_{N+1} \\ n \notin \{ \leq p_N \} }} \sigma_5(n) e^{-2\pi n} \nonumber \\
	&\quad \times \bigg[ 1 + \frac{s_0(s_0-6)}{(2 \pi n)^2} + \mathcal{O} \bigg( \frac{\lb 1 + |s_0| \rb^3}{n^3} \bigg) \bigg].
\end{align}
\end{cor}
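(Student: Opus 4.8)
The plan is to obtain Corollary \ref{corr1} as a direct substitution of the termwise large-$n$ asymptotics of Lemma \ref{lemmma3p3} into the exact identity of Lemma \ref{lemmma3p4}, with the whole computation hinging on a symmetry of the expansion coefficients under the reflection $s_0 \mapsto a+1-s_0$.

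First I would record that, up to the constant prefactors read off from Lemma \ref{lemmamellins}---namely $\pi$ for $a=1$, $2\pi^2$ for $a=3$, and $4\pi^3$ for $a=5$---each $J_a(N,s_0)$ is precisely the inverse Mellin transform whose asymptotics are supplied by Lemma \ref{lemmma3p3}. Writing $F_a(s_0)$ for the bracketed factor in that lemma, this says $J_1(N,s_0) = \pi (2\pi n)^2 e^{-2\pi n} F_1(s_0)$, $J_3(N,s_0) = 2\pi^2 (2\pi n) e^{-2\pi n} F_3(s_0)$, and $J_5(N,s_0) = 4\pi^3 e^{-2\pi n} F_5(s_0)$, each valid once $n \gg |s_0|$.

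The key step is to evaluate the combination $J_a(N,s_0) + J_a(N,a+1-s_0)$ that sits inside the sum of Lemma \ref{lemmma3p4}. Since $F_a$ carries the same $n$-dependent prefactor at both arguments, this reduces to forming $F_a(s_0) + F_a(a+1-s_0)$. The zeroth-order terms double to $2$; the first-order coefficients lose their $s_0$-dependence and leave a pure constant, so that after normalizing by this factor of $2$ the linear coefficient inside the bracket becomes $-3$, $-2$, $0$ for $a=1,3,5$; and the one computation genuinely worth checking is that the second-order coefficients collapse to the clean forms $2(s_0-1)^2$, $2s_0(s_0-4)$, $2s_0(s_0-6)$. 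In each case $F_a(s_0)+F_a(a+1-s_0)$ is exactly twice the bracket displayed in the corollary, and the surviving factor of $2$ merges with the prefactor to give $2\pi$, $4\pi^2$, and $8\pi^3$.

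The only point demanding care, rather than being genuinely hard, is the uniformity of the error after evaluating at the reflected argument. Since $|a+1-s_0| \asymp |s_0|$, the hypothesis $n \gg |s_0|$ of Lemma \ref{lemmma3p3} applies simultaneously to $J_a(N,s_0)$ and $J_a(N,a+1-s_0)$, each contributing an error $\mathcal{O}((1+|s_0|)^3/n^3)$, and their sum retains this order. The regime named in the statement ensures that every $n \geq p_{N+1}$ in the sum of Lemma \ref{lemmma3p4} lies in this asymptotic range; inserting the simplified combination and resumming against $\sigma_a(n)$ then yields the three displayed expansions.
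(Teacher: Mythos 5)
Your proposal is correct and follows exactly the route the paper intends: the paper offers no written proof beyond the remark that Lemmas \ref{lemmma3p3} and \ref{lemmma3p4} ``immediately lead'' to the corollary, and your substitution of the Lemma \ref{lemmma3p3} asymptotics into the exact identity of Lemma \ref{lemmma3p4}, followed by the symmetrization $F_a(s_0)+F_a(a+1-s_0)$, is precisely that omitted computation. Your second-order checks ($2(s_0-1)^2$, $2s_0(s_0-4)$, $2s_0(s_0-6)$) and the merging of the factor $2$ into the prefactors $2\pi$, $4\pi^2$, $8\pi^3$ all verify correctly.
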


\begin{thm}
\label{thm2mainthm}
Let $\xi(s)$ be the Riemann Xi function. For any $\mathcal{R}>0$, there exist constants 
\begin{align*}
B_1(\mathcal{R}),B_2(\mathcal{R}), \dots, B_6(\mathcal{R})
\end{align*}
such that for any $N \geq 1$ we have
\begin{align}
	\left| \xi(s)\xi(s-1) - \Xi_{N,1}(s) -(2\pi p_{N+1})^3 e^{-2\pi p_{N+1}} \right| &\leq B_{1}(\mathcal{R}) p^2_{N+1} e^{-2\pi p_{N+1}} \nonumber \\
	&\quad + B_2(\mathcal{R}) p_{N+2}^3 e^{-2\pi p_{N+2}}, \nonumber \\
	\left| \xi(s)\xi(s-3) - \Xi_{N,3}(s) - (2\pi)^3 p_{N+1}^4  e^{-2\pi p_{N+1}} \right| &\leq B_3(\mathcal{R}) p^3_{N+1} e^{-2\pi p_{N+1}} \nonumber \\
	&\quad + B_4(\mathcal{R}) p_{N+2}^4 e^{-2\pi p_{N+2}}, \nonumber \\
	\left| \xi(s) \xi(s-5) - \Xi_{N,5}(s) - (2\pi)^3 p^5_{N+1} e^{-2\pi p_{N+1}} \right| &\leq B_5(\mathcal{R}) p_{N+1}^3 e^{-2\pi p_{N+1}} \nonumber \\
	&\quad + B_6(\mathcal{R}) p_{N+2}^5 e^{-2\pi p_{N+2}}, \nn
\end{align}
uniformly for all $s\in\mathcal{C}$ with $|s|\leq \mathcal{R}$, where functions $\Xi_{N,a=1,3,5}(s)$ are defined in Eq. \eqref{hereeq28}.
\end{thm}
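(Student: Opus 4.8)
The plan is to start from the exact series representation established in Lemma~\ref{lemmma3p4},
\[
\xi(s)\xi(s-a) - \Xi_{N,a}(s) = \sum_{\substack{n \ge p_{N+1} \\ n \notin \{\le p_N\}}} \sigma_a(n)\lsb J_a(N,s) + J_a(N,a+1-s)\rsb ,
\]
and to split off the single index $n = p_{N+1}$, which will reproduce the explicitly subtracted quantity, from the tail $n \ge p_{N+2}$. First I would pin down the index set: since $n \notin \{\le p_N\}$ forces $n$ to have a prime factor $\ge p_{N+1}$, the smallest admissible $n$ is $p_{N+1}$ itself, and Bertrand's postulate ($p_{N+2} < 2p_{N+1}$) rules out any admissible $n$ strictly between $p_{N+1}$ and $p_{N+2}$, so the second smallest index is exactly $p_{N+2}$. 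Throughout write $c_1 = 2$, $c_3 = 1$, $c_5 = 0$ for the exponent of $2\pi n$ in the leading factor of $J_a$ coming from Lemma~\ref{lemmamellins}.

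For the head term I would feed $n = p_{N+1}$ into the asymptotic expansions of Lemma~\ref{lemmma3p3} (equivalently Corollary~\ref{corr1}). Using $\sigma_a(p_{N+1}) = 1 + p_{N+1}^a$, the product of the leading coefficient $p_{N+1}^a$ with the leading $1$ of the bracket reproduces exactly $(2\pi)^3 p_{N+1}^{a+c_a} e^{-2\pi p_{N+1}}$, which is precisely the term subtracted in the statement (note $a+c_a = 3,4,5$). Everything else in the head is a remainder: the constant $1$ in $\sigma_a = 1 + p_{N+1}^a$, the subleading orders of the bracket, and the explicit $\mathcal{O}\lb (1+|s|)^3/n^3\rb$ tail of Lemma~\ref{lemmma3p3}. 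Since $|s| \le \mathcal{R}$, each coefficient there (such as $(s-1)^2$, $s(s-4)$, $s(s-6)$) is bounded by a constant depending only on $\mathcal{R}$, so the head remainder is $\le B(\mathcal{R})\, p_{N+1}^{?}\, e^{-2\pi p_{N+1}}$. For $a=1,3$ the $1/n$ correction is nonzero, dropping one power and giving exponents $p_{N+1}^2$ and $p_{N+1}^3$; for $a=5$ the $1/n$ term cancels in $J_5(N,s)+J_5(N,6-s)$ (its coefficient is $(s-3)+((6-s)-3)=0$), so the first surviving correction is $\mathcal{O}(1/n^2)$ and the head remainder is only $p_{N+1}^3$. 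These are exactly the exponents $p_{N+1}^2, p_{N+1}^3, p_{N+1}^3$ appearing in the three inequalities.

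Third, the tail. Inserting the uniform estimate $|J_a(N,s)| \ll_{\mathcal{R}} n^{c_a} e^{-2\pi n}$ (valid for $|s|\le\mathcal{R}$ and $n \ge p_{N+2}$, again from Lemma~\ref{lemmma3p3}, and likewise for the argument $a+1-s$), the tail is controlled by $\sum_{n \ge p_{N+2}} \sigma_a(n)\, n^{c_a} e^{-2\pi n}$. For $a=3,5$ the clean bound $\sigma_a(n) = n^a\sigma_{-a}(n) \le \zeta(a)\, n^a$ makes the summand $\ll n^{a+c_a} e^{-2\pi n}$, a sequence whose consecutive ratio is at most $e^{-2\pi}\lb 1+1/p_{N+2}\rb^{a+c_a} < 1$, so the sum is dominated by its first term and is $\ll_{\mathcal{R}} p_{N+2}^{a+c_a} e^{-2\pi p_{N+2}}$, as required.

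The hard part will be the $a=1$ tail, where $\sigma_{-1}(n)$ is unbounded and $\sigma_1(n)$ carries a $\log\log n$-type factor, so the clean geometric comparison above is not immediate; this uniform-in-$N$ estimate is the main obstacle. I would handle it by using $\sigma_1(n) \ll n^{1+\eps}$ (or the sharp $\sigma_1(n) \ll n\log\log n$) together with the fact that the first tail index $n = p_{N+2}$ is prime, so $\sigma_1(p_{N+2}) = 1 + p_{N+2}$ gives a genuinely clean $\asymp p_{N+2}^3 e^{-2\pi p_{N+2}}$, while the remaining terms are crushed by the factor $e^{-2\pi(n-p_{N+2})}$. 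Finally, the expansions of Lemma~\ref{lemmma3p3} require $n \gg |s|$, so the head analysis only applies once $p_{N+1} \gg \mathcal{R}$; the finitely many smaller $N$ are dispatched by compactness, since for each such $N$ the left-hand side is a continuous function of $s$ on $|s|\le\mathcal{R}$, hence bounded, and can be absorbed by enlarging the constants $B_i(\mathcal{R})$.
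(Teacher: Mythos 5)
Your decomposition is the same one the paper intends (its proof of Theorem \ref{thm2mainthm} is just a citation of Corollary \ref{corr1}): split the series of Lemma \ref{lemmma3p4} into the head $n=p_{N+1}$ and the tail $n\ge p_{N+2}$, with Bertrand's postulate ruling out intermediate admissible indices. Your head-term analysis is correct and uniform in $N$ (including the cancellation of the $1/n$ term for $a=5$ and the compactness argument for the finitely many $N$ with $p_{N+1}$ not large compared to $\mathcal{R}$), and your tail bounds for $a=3,5$ via $\sigma_a(n)=n^a\sigma_{-a}(n)\le\zeta(a)\,n^a$ close correctly.

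However, the step you flagged as the main obstacle --- the $a=1$ tail --- would genuinely fail as you propose to handle it. The required bound is
\[
\sum_{\substack{n\ge p_{N+2}\\ n\notin\{\le p_N\}}}\sigma_1(n)\,n^2 e^{-2\pi n}\;\le\; B_2(\mathcal{R})\,p_{N+2}^3\,e^{-2\pi p_{N+2}}
\]
with $B_2$ independent of $N$. Using $\sigma_1(n)\ll n\log\log n$ the terms with $n>p_{N+2}$ contribute $\ll p_{N+2}^3(\log\log p_{N+2})\,e^{-2\pi p_{N+2}}$, and using $\sigma_1(n)\ll_{\eps}n^{1+\eps}$ they contribute $\ll p_{N+2}^{3+\eps}e^{-2\pi p_{N+2}}$; in both cases the factor multiplying $p_{N+2}^3e^{-2\pi p_{N+2}}$ is unbounded as $N\to\infty$, so no fixed $B_2(\mathcal{R})$ works. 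The exponential crushing $e^{-2\pi(n-p_{N+2})}$ does not rescue this, because already the first crushed term (e.g.\ $n=p_{N+2}+1$, when admissible) can carry the unbounded factor. The missing idea is to use admissibility inside the whole tail, not only at its first term: every admissible $n$ factors as $n=p^k m$ with $p\ge p_{N+1}$ prime and $\gcd(p,m)=1$, so by multiplicativity
\[
\sigma_1(n)=\sigma_1(p^k)\,\sigma_1(m)\le 2p^k\cdot m\left(1+\ln m\right)\le 2n\left(1+\ln\!\left(n/p_{N+1}\right)\right).
\]
Writing $n=p_{N+2}+j$ and invoking Bertrand's postulate $p_{N+2}<2p_{N+1}$ gives $n/p_{N+1}\le 2+j$, so the logarithm depends only on the offset $j$ and not on $N$; hence
\[
\sum_{\substack{n\ge p_{N+2}\\ n\notin\{\le p_N\}}}\sigma_1(n)\,n^2 e^{-2\pi n}\le 2\,p_{N+2}^3\,e^{-2\pi p_{N+2}}\sum_{j\ge 0}(1+j)^3\left(1+\ln(2+j)\right)e^{-2\pi j}\ll p_{N+2}^3\,e^{-2\pi p_{N+2}},
\]
with an absolute implied constant. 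With this replacement for your $a=1$ tail estimate, the rest of your argument closes the theorem.
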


\begin{proof}
Immediately follows from Corollary \ref{corr1}.
\end{proof}

\begin{cor}
For $a=1,3$, $\sigma > \Re(s_0)$, $\sigma>\Re(1-s_0)$, and any $s_0 \in \mathbb{C}$, the product $\xi(s_0)\xi(s_0-a)$ has an integral representation as
\be
\label{eq336}
\xi(s_0) \xi(s_0 - a) = \frac{1}{2\pi i} \sum_{n=1}^\infty \int_{\Re(s)=\sigma}  \sigma_a(n)  \lsb \frac{g(s)g(s-a)}{s-s_0} \frac{1}{n^s}  + \frac{g(s)g(s+a)}{s-1+s_0} \frac{1}{n^{s+a}} \rsb ds.
\ee
For $a=5$ Eq. \eqref{eq336} still holds, with the additional restriction that $\sigma>3$, to stay to the right of the $s=3$ pole in $g(s-5)$.
\end{cor}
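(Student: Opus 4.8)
The plan is to descend from the purely $\xi$-valued representation \eqref{eq212} obtained inside the proof of the main Theorem,
\be
\xi(s_0)\xi(s_0-a) = \frac{1}{2\pi i}\int_{\Re(s)=\sigma}\lsb \frac{\xi(s)\xi(s-a)}{s-s_0} + \frac{\xi(s)\xi(s+a)}{s-1+s_0}\rsb ds,
\ee
which holds for every $\sigma > \max\lb \Re(s_0),\Re(1-s_0)\rb$ because $\xi(s)\xi(s\pm a)$ is entire and the integrand's only poles are the simple ones at $s=s_0$ and $s=1-s_0$. Writing $\xi=g\zeta$ and inserting the generating series \eqref{sigmaangenerating}, I would replace $\xi(s)\xi(s-a) = g(s)g(s-a)\sum_{n\ge1}\sigma_a(n)n^{-s}$ and, using $\zeta(s)\zeta(s+a) = \sum_{n\ge1}\sigma_{-a}(n)n^{-s}$ together with $\sigma_{-a}(n)=\sigma_a(n)n^{-a}$, also $\xi(s)\xi(s+a) = g(s)g(s+a)\sum_{n\ge1}\sigma_a(n)n^{-s-a}$. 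Substituting and interchanging sum and integral produces exactly \eqref{eq336}.

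The catch is that the first of these series converges absolutely on the line $\Re(s)=\sigma$ only when $\sigma>a+1$, which is stronger than the claimed range. So I would first establish \eqref{eq336} on the half-plane $\sigma>a+1$: there the Gamma factors inside $g(s)$, $g(s\pm a)$ decay exponentially as $|\Im(s)|\to\infty$, while $\sum_n\sigma_a(n)n^{-\sigma}<\infty$, so the iterated sum-integral is absolutely convergent and Fubini justifies swapping $\sum_n$ with $\int$.

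To reach the stated weaker range I would argue that the right-hand side of \eqref{eq336} is independent of $\sigma$. By Lemma \ref{lemmamellins} every per-$n$ integral equals an explicit closed form in $s_0$ and $n$ (the functions $J_a$), and moving the contour to the left changes nothing as long as $\sigma$ stays to the right of the pole of the first term at $s=s_0$ and of the pole of the second term at $s=1-s_0$; these are precisely the conditions $\sigma>\Re(s_0)$ and $\sigma>\Re(1-s_0)$ (the negative-integer poles of the Gamma factors remain to the left throughout, since $\max(\Re(s_0),\Re(1-s_0))\ge\tfrac12$). The one genuine subtlety is $a=5$: by the duplication formula \eqref{eq319dupformula}, $g(s)g(s-5)$ carries a factor $(s-3)^{-1}$, so the first per-$n$ integrand has a pole at $s=3$ that does not cancel term by term — it is killed only after summation, by the trivial zero of $\zeta(s-5)$ at $s=3$ — so to shift the contour term by term one must keep $\sigma>3$, which is exactly the extra restriction in the statement. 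Hence the right-hand side of \eqref{eq336} is constant in $\sigma$ throughout the admissible region, and since it agrees with the $\sigma$-independent left-hand side on the overlap $\sigma>a+1$, the identity holds on the whole range.

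The main obstacle is exactly this mismatch between the convergence abscissa $\sigma>a+1$ of the divisor Dirichlet series and the weaker range being claimed; all the work is in the final contour-shift / $\sigma$-independence step, where the uncancelled $s=3$ pole for $a=5$ is the delicate point and is what forces the additional hypothesis $\sigma>3$.
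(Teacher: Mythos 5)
Your proposal is correct, and it enters the problem through a different door than the paper. The paper's proof starts from Eq.~\eqref{eq28}, i.e.\ from the representation of $\Xi_{N,a}(s_0)$ as a vertical-line integral of the truncated Euler products $\Xi^{\mathrm{Euler}}_{N,\pm a}$, invokes Theorem~\ref{thm2mainthm} to pass to $\xi(s_0)\xi(s_0-a)$, expands the Euler products into Dirichlet series, and then — exactly as you do — exchanges sum and integral and shifts the contour term by term, using that no poles lie between the two vertical lines. You instead start from the purely $\xi$-valued identity \eqref{eq212} from the proof of the integral-formula theorem, which bypasses the Euler-product and $N\to\infty$ approximation machinery entirely; the second half of your argument (Fubini on a far-right line, then per-$n$ contour shift) coincides with the paper's. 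Your version is in fact more careful on the one delicate analytic point: the paper asserts the Dirichlet expansion is valid ``because $\sigma'>1$,'' whereas, as you note, $\sum_n \sigma_a(n)n^{-s}$ converges only for $\Re(s)>a+1$ (since $\sigma_a(n)=n^a\sigma_{-a}(n)$ with $1\le \sigma_{-a}(n)\le\zeta(a)$), so the identity must first be established on $\sigma>a+1$ and then propagated leftward by contour shifting — precisely your order of operations. Your diagnosis of the $a=5$ restriction is also the right mechanism and consistent with the duplication formula \eqref{eq319dupformula}: the per-$n$ pole of $g(s)g(s-5)$ at $s=3$ is not cancelled termwise, only after summation by the trivial zero of $\zeta(s-5)$, which is why the term-by-term statement needs $\sigma>3$ while \eqref{eq212} does not. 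One cosmetic remark: the appeal to Lemma~\ref{lemmamellins} for $\sigma$-independence of the per-$n$ integrals is unnecessary; what you are actually using (and what suffices) is Cauchy's theorem together with the exponential decay of the Gamma factors on the horizontal segments of the connecting rectangle, plus the location of the poles ($s_0$, $1-s_0$, the negative integers, and $s=3$ when $a=5$), all of which stay left of the admissible lines.
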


\begin{proof}
For $a=1,3$, consider real $\sigma'>1$ such that $\sigma' > \Re(s_0)$, $\sigma' > \Re(1-s_0)$. From Theorem \ref{thm2mainthm}, the product $\xi(s)\xi(s-a)$ has an integral representation given by Eq. \eqref{eq28},
\be
\xi(s)\xi(s-a) = \frac{1}{2\pi i} \int_{\Re(s)=\sigma'} \bigg( \frac{\Xi^\mrm{Euler}_{N,a}(s)}{s-s_0} + \frac{\Xi^\mrm{Euler}_{N,-a}(s)}{s-1+s_0} \bigg) ds,
\ee
for any $s_0\in\mathbb{C}$. Because $\sigma' > 1$, the Euler product can be written as a Dirichlet series,
\ba
\nn \xi(s)\xi(s-a) &=& \frac{1}{2\pi i} \int_{\Re(s)=\sigma'} \sum_{n=1}^\infty \left[ \frac{g(s)g(s-k)}{s-s_0} \frac{\sigma_a(n)}{n^s}+ \frac{g(s)g(s+a)}{s-1+s_0} \frac{\sigma_{-a}(n)}{n^s} \right] ds \\
\label{eq338sumint}
&=& \frac{1}{2\pi i} \int_{\Re(s)=\sigma'} \sum_{n=1}^\infty \sigma_{a}(n) \left[ \frac{g(s)g(s-k)}{s-s_0} \frac{1}{n^s}+ \frac{g(s)g(s+a)}{s-1+s_0} \frac{1}{n^{s+a}} \right] ds.
\ea 
This proves the corollary for $a=1,3$ in the region $\sigma=\sigma'>1$. To prove it in the region $ \max \lb \Re(s_0), \Re(1-s_0) \rb < \sigma \leq~1$ (assuming that $s_0$ is in the critical strip), note that for $\sigma'>1$ we can exchange the sum and integral in Eq. \eqref{eq338sumint},
\be
\xi(s)\xi(s-a) = \frac{1}{2\pi i} \sum_{n=1}^\infty \int_{\Re(s)=\sigma'} \sigma_{a}(n) \left[ \frac{g(s)g(s-k)}{s-s_0} \frac{1}{n^s}+ \frac{g(s)g(s+a)}{s-1+s_0} \frac{1}{n^{s+a}} \right] ds.
\ee
Now,
\ba
& &\int_{\Re(s)=\sigma} \sigma_{a}(n) \left[ \frac{g(s)g(s-k)}{s-s_0} \frac{1}{n^s}+ \frac{g(s)g(s+a)}{s-1+s_0} \frac{1}{n^{s+a}} \right] ds \nn\\
&=&\int_{\Re(s)=\sigma'} \sigma_{a}(n) \left[ \frac{g(s)g(s-k)}{s-s_0} \frac{1}{n^s}+ \frac{g(s)g(s+a)}{s-1+s_0} \frac{1}{n^{s+a}} \right] ds
\ea
because there are no poles between the vertical lines at $\sigma$ and $\sigma'$, and the result follows.

For $a=5$ and real $\sigma'>3$ such that $\sigma' > \Re(s_0)$, $\sigma' > \Re(1-s_0)$, the steps leading up to and including Eq. \eqref{eq338sumint} still hold.
\end{proof}

\begin{remark}
All the steps above apply to the Riemann Xi function as well, and in that case we obtain
\be
\xi(s_0) = \frac{1}{2\pi i} \sum_{n=1}^\infty \int_{\Re(s)=\sigma}  \frac{g(s)}{n^s}  \lb \frac{1}{s-s_0} + \frac{1}{s-1+s_0} \rb ds
\ee
for all $s_0\in\mathbb{C}$ and $\sigma>\max\lb\Re(s_0),1-\Re(s_0)\rb$.
\end{remark}

\section{Polynomial constraint relations for the divisor function}

In this section we will derive constraint relations on the divisor function. The strategy is to integrate the presentation \eqref{eq336} for $\xi(s)\xi(s-a)$ on a vertical line between $\rho-iT$ and $\rho+iT$ for $\rho>3$, and to compare this integral with the integral of the usual presentation of $\xi(s)\xi(s-a)$.

\begin{thm}
\label{thm3ishere}
For all integers $k\geq 0$ and with the divisor function $\sigma_a(n)$ defined as in Eq. \eqref{sigmaandefn}, we~have
\begin{align} \label{eq41theseareconstraints}
	\sum_{n=1}^\infty \sigma_1(n) \bigg[ (2\pi n)^{k+1} \, _1F_1(-k-3;k+2;2\pi n)-\frac{(2\pi n)^{k+2} \,
   _1F_1(-k-2;k+3;2\pi n)}{k+2} \bigg] e^{-2\pi n} &= 0, \nonumber \\
   \sum_{n=1}^\infty \sigma_3(n) \lsb (2\pi n)^{k+1} \,
   _1F_1(-k-2;k+5;2\pi n)-\frac{ (2\pi n)^{k+2} \,_1F_1(-k-1;k+6;2\pi n)}{k+5} \rsb e^{-2\pi n } &=0,  \nonumber \\
   \sum_{n=1}^\infty \sigma_5(n) \lb 2\pi n \rb^{k+1} \, _1F_1(-k-1;k+7;2\pi n) \, e^{-2\pi n } &= 0,
\end{align}
where $_1 F _1(a,b,z)$ is the confluent hypergeometric function.
\end{thm}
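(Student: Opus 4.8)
The plan is to realize each identity as the vanishing of a single contour integral, evaluated in two different ways. For a parameter $k\ge 0$ I would introduce an auxiliary polynomial kernel $\Phi_{a,k}(s_0)$ and consider
\[
\mathcal{I}_{a,k} \coloneqq \frac{1}{2\pi i}\int_{\Re(s_0)=\rho} \xi(s_0)\,\xi(s_0-a)\,\Phi_{a,k}(s_0)\,ds_0,
\]
with $\rho$ sufficiently large (any $\rho>\Re(a)+1$, so that the Dirichlet series converges on the line). The kernel is chosen to be antisymmetric under the reflection $s_0\mapsto a+1-s_0$ and such that the inverse Mellin transform of $g(s_0)g(s_0-a)\Phi_{a,k}(s_0)$ reproduces the bracketed confluent hypergeometric combination in \eqref{eq41theseareconstraints}. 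For $a=5$, for instance, the matching against the duplication formula \eqref{eq319dupformula} leads to
\[
\Phi_{5,k}(s_0) = (s_0-3)\prod_{j=1}^{k}(6+j-s_0)\prod_{j=1}^{k}(s_0+j),
\]
while for $a=1,3$ the higher-degree numerators $(s_0-2)(s_0-1)$ and $(s_0-4)$ of \eqref{eq319dupformula} are responsible for the two-term $\,_1F_1$ combinations.

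First I would evaluate $\mathcal{I}_{a,k}$ using the usual Dirichlet presentation. For $\Re(s_0)=\rho>\Re(a)+1$ we have $\xi(s_0)\xi(s_0-a)=g(s_0)g(s_0-a)\sum_{n\ge1}\sigma_a(n)n^{-s_0}$, and since the Gamma factors in $g(s_0)g(s_0-a)$ force absolute convergence in the vertical direction, Fubini permits interchanging the sum with the integral, giving
\[
\mathcal{I}_{a,k} = \sum_{n=1}^\infty \sigma_a(n)\,\frac{1}{2\pi i}\int_{\Re(s_0)=\rho} g(s_0)g(s_0-a)\,\Phi_{a,k}(s_0)\,n^{-s_0}\,ds_0.
\]
After substituting the duplication formulas \eqref{eq319dupformula}, the inner integral is an inverse Mellin transform of exactly the type treated in Lemma \ref{lemmamellins}; closing to the left and summing the residues at the poles $s_0=-1,-2,\dots$ of $\Gamma(s_0+1)$ produces the exponential $e^{-2\pi n}$ together with a terminating Kummer series, i.e.\ precisely the factor $(2\pi n)^{k+1}\,_1F_1(\cdots;2\pi n)\,e^{-2\pi n}$ appearing in \eqref{eq41theseareconstraints}. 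Thus $\mathcal{I}_{a,k}$ equals a nonzero constant times the left-hand side of the desired identity.

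Next I would show $\mathcal{I}_{a,k}=0$ by symmetry. The functional equation $\xi(s)=\xi(1-s)$ gives $\xi(s_0)\xi(s_0-a)=\xi(1-s_0)\xi(1+a-s_0)$, so $\xi(s_0)\xi(s_0-a)$ is invariant under the reflection $s_0\mapsto a+1-s_0$ about the line $\Re(s_0)=\tfrac{a+1}{2}$. Since $\xi(s_0)\xi(s_0-a)\Phi_{a,k}(s_0)$ is entire and decays exponentially in the imaginary direction because of the Gamma factors, the contour $\Re(s_0)=\rho$ can be shifted to the symmetry axis $\Re(s_0)=\tfrac{a+1}{2}$ without crossing any poles, the horizontal segments vanishing as $T\to\infty$. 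On that axis the integrand is the product of the symmetric function $\xi(s_0)\xi(s_0-a)$ with the antisymmetric kernel $\Phi_{a,k}$, hence is odd under $s_0\mapsto a+1-s_0$, and the integral of an odd integrand over its own symmetry axis is zero. Comparing the two evaluations of $\mathcal{I}_{a,k}$ then forces the stated identity.

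The main obstacle is the bookkeeping of the first step: constructing $\Phi_{a,k}$ and verifying that the inverse Mellin transform of $g(s_0)g(s_0-a)\Phi_{a,k}(s_0)$ is exactly the prescribed $\,_1F_1$ combination, with the correct overall constant and, for $a=1,3$, the correct two-term structure; this is the place where a direct computation in the spirit of Lemmas \ref{lemmamellins}--\ref{lemmma3p3} is convenient. Everything else is routine once the decay of the Gamma factors is invoked, since that decay simultaneously supplies the absolute convergence needed for Fubini and the vanishing of the horizontal segments in the contour shift, which are the only analytic points requiring care.
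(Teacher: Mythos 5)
Your proposal is correct, but it takes a genuinely different route from the paper. The paper derives the constraints as a by-product of its integral representation \eqref{eq336}: it integrates $\xi(s_0)\xi(s_0-a)$ over the segment $\Re(s_0)=\rho$, $|\Im(s_0)|\le T$, once via \eqref{eq336} and once via the Dirichlet series, expands both in powers of $1/T$ (after isolating three ``nonperturbative'' remainders $I^{\mathrm{nonpert},1,2,3}$), matches the coefficients of $1/T^{2k+1}$, and then extracts coefficients of powers of the free parameter $\rho$ to reach the two-term kernel family \eqref{eq417}, which is finally turned into the ${}_1F_1$ form by contour integration. You instead prove each identity in one stroke: evaluate $\int_{\Re(s_0)=\rho}\xi(s_0)\xi(s_0-a)\Phi_{a,k}(s_0)\,ds_0$ by the Dirichlet series plus residues, and show it vanishes by shifting to the axis $\Re(s_0)=\frac{a+1}{2}$ where the integrand is odd under $s_0\mapsto a+1-s_0$. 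Your $a=5$ kernel checks out exactly: $g(s_0)g(s_0-5)\Phi_{5,k}(s_0)=4\pi^3(2\pi)^{-s_0}(s_0-6)\Gamma(s_0+k+1)\prod_{j=1}^{k}(6+j-s_0)$ (the factor $s_0-3$ also cancels the pole of $g(s_0-5)$), and the residue sum equals $-4\pi^3\frac{(2k+7)!}{(k+6)!}(2\pi n)^{k+1}\,{}_1F_1(-k-1;k+7;2\pi n)\,e^{-2\pi n}$ after Kummer's transformation; the deferred bookkeeping for $a=1,3$ also closes, with kernels $(s_0-1)\prod_{j=1}^{k}(s_0+j)(2+j-s_0)$ and $(s_0-2)\prod_{j=1}^{k}(s_0+j)(4+j-s_0)$, whose numerators $(s_0-2)(s_0-1)^2$ and $(s_0-4)(s_0-2)$ at the poles produce precisely the factors $2(k+2+m)$ and $2(k+3+m)$ into which the two-term ${}_1F_1$ brackets collapse. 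What each approach buys: yours is shorter and more transparent, avoiding the error-term analysis and the double expansion in $1/T$ and $\rho$, and it in fact explains the paper's family too, since folding the second term of \eqref{eq417} by $s\mapsto s+a$ yields the kernel $(s-a-1)^k-(-s)^k$, which is antisymmetric under the same reflection; the paper's derivation, in exchange, ties the theorem to its main integral formula and produces the general-odd-$a$ constraint family that underlies its later conjectures.
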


We plot the summation kernel multiplying $\sigma_1(n)$ on the first line of Eq. \eqref{eq41theseareconstraints} in Figure \ref{fig1}.

Before proving Theorem \ref{thm3ishere}, we will give a few remarks.

\begin{remark}
The series expansions of the hypergeometric functions terminate, and the summation kernels in Eqs. \eqref{eq41theseareconstraints} are polynomials in $2\pi n$, of degrees $2k+4$, $2k+3$ and $2k+2$  respectively. In terms of these polynomials the constraints \eqref{eq41theseareconstraints} are
\ba
\label{eq42threecases}
\sum_{n=1}^\infty \sigma_1(n) \sum_{j=0}^{k+3} \frac{(-1)^{j}(k+j+3)(2\pi n)^{k+j+1}}{j!(k-j+3)! (k+j+1)!} e^{-2\pi n} &=& 0, \nn \\
\sum_{n=1}^\infty \sigma_3(n) \sum_{j=0}^{k+2} \frac{(-1)^{j} (k+j+2)(2\pi n)^{k+j+1}}{j! (k-j+2)! (k+j+4)!} e^{-2\pi n } &=&0, \\
\sum_{n=1}^\infty \sigma_5(n) \sum_{j=0}^{k+1} \frac{(-1)^{j} (2\pi n)^{k+j+1}}{j! (k-j+1)! (k+j+6)!} e^{-2\pi n } &=& 0,\nn
\ea
for integer $k\geq 0$. For example, for $k=0$ these relations reduce to
\ba
\sum_{n=1}^\infty \sigma_1(n) \left(2 \pi ^3 n^3-10 \pi ^2 n^2+12 \pi  n-3\right) n e^{-2\pi n} &=& 0,\\
\sum_{n=1}^\infty \sigma_3(n)  \left(4 \pi ^2 n^2-18 \pi  n+15\right) n e^{-2\pi n } &=&0, \\
\label{eq45transcend}
\sum_{n=1}^\infty \sigma_5(n) (2 \pi  n-7) n e^{-2\pi n } &=& 0.
\ea

These relations give further transcendentality results. For example, in Eq. \eqref{eq45transcend} at least one of the two sums $\sum_{n=1}^\infty \sigma_5(n) n^2 e^{-2\pi n}$, $\sum_{n=1}^\infty \sigma_5(n) n e^{-2\pi n}$ must be transcendental.

\end{remark}

\begin{figure}[htp]
\centering
\includegraphics[scale=0.64]{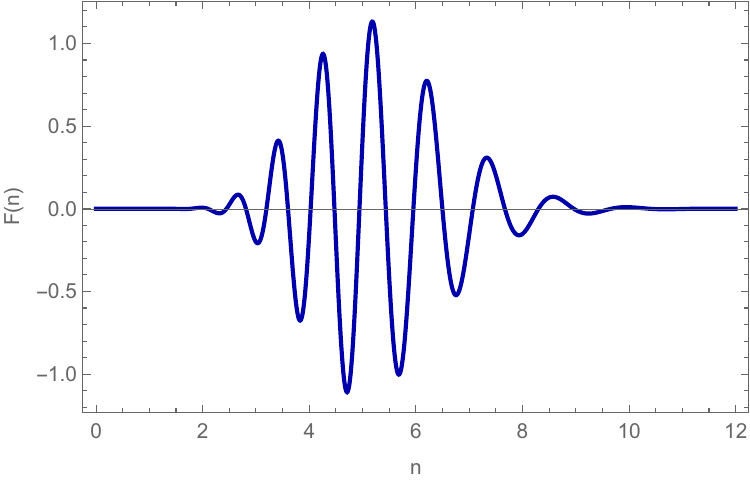} %0.7
\includegraphics[scale=0.64]{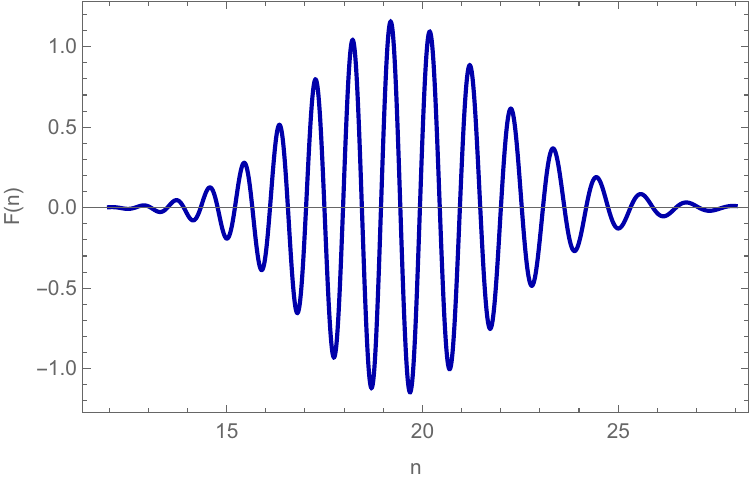} %0.7
\caption{The summation kernel $F(n)$ multiplying $\sigma_1(n)$ in the first line of Eqs.~\eqref{eq41theseareconstraints} and \eqref{eq42threecases}, as a function of $n$, for $k=30$ and $k=120$ respectively. The overall normalizations are arbitrary. Note that $F(n)$ evaluated only at integer values of $n$ contributes to Eqs. \eqref{eq41theseareconstraints}, \eqref{eq42threecases}. }
\label{fig1}
\end{figure}

We are now in a position to prove Theorem \ref{thm3ishere}. 

\begin{proof}[Proof of Theorem \textnormal{\ref{thm3ishere}}]
For $a=1,3,5$, $\rho>1$ and $T>0$ let
\be
\label{eq41start}
I(\rho,T,a) \coloneqq \int_{t=-T}^T \xi(\rho+it) \xi(\rho+it-a) dt.
\ee
From Eq. \eqref{eq336}, for $\sigma>\rho$, $\sigma>3$ this is
\ba
I(\rho,T,a) &=& \frac{1}{2\pi i} \int_{t=-T}^T \sum_{n=1}^\infty \int_{\Re(s)=\sigma}  \sigma_a(n)  \lsb \frac{g(s)g(s-a)}{s-\rho-it} \frac{1}{n^s}  + \frac{g(s)g(s+a)}{s-1+\rho+it} \frac{1}{n^{s+a}} \rsb ds dt \nn\\
&=& \frac{1}{2\pi i} \sum_{n=1}^\infty \sigma_a(n) \int_{\Re(s)=\sigma}  \int_{t=-T}^T  \lsb \frac{g(s)g(s-a)}{s-\rho-it} \frac{1}{n^s}  + \frac{g(s)g(s+a)}{s-1+\rho+it} \frac{1}{n^{s+a}} \rsb dt ds,
\ea
where we have used Fubini's theorem to interchange order of the sum and integrals. We now introduce another cutoff, $0<T_0\ll T$, and we let
\begin{align}
I^\mrm{nonpert,1}(\rho,T_0,T,a) \coloneqq \frac{1}{2\pi i} \sum_{n=1}^\infty \sigma_a(n) \int_{\substack{\Re(s)=\sigma\\|\Im(s)|>T_0}}  \int_{t=-T}^T  \bigg[ &\frac{g(s)g(s-a)}{s-\rho-it} \frac{1}{n^s} \nonumber \\ 
+ &\frac{g(s)g(s+a)}{s-1+\rho+it} \frac{1}{n^{s+a}} \bigg] dt ds,
\end{align}
so that
\ba
I(\rho,T,a) &=& \frac{1}{2\pi i} \sum_{n=1}^\infty \sigma_a(n) \int_{\substack{\Re(s)=\sigma\\|\Im(s)|<T_0}}  \int_{t=-T}^T  \lsb \frac{g(s)g(s-a)}{s-\rho-it} \frac{1}{n^s}  + \frac{g(s)g(s+a)}{s-1+\rho+it} \frac{1}{n^{s+a}} \rsb dt ds \nn \\
&+& I^\mrm{nonpert,1}(\rho,T_0,T,a).
\label{eq4444}
\ea
We perform the $t$ integrals first. These yield
\ba
\int_{t=-T}^T \frac{dt}{s-\rho-it} &=& i\lsb \ln \lb s - \rho - iT \rb - \ln\lb s - \rho + iT \rb\rsb, \\
\int_{t=-T}^T \frac{dt}{s-1+\rho+it} &=& - i \lsb \ln \lb s-1 + \rho + i T  \rb - \ln \lb s-1 + \rho - i T  \rb  \rsb.
\ea
Expanding in the large $T$ limit (and using that $T_0 \ll T$) we thus have
\ba
\int_{t=-T}^T \frac{dt}{s-\rho-it} &=& \pi + 2 \sum_{k=0}^\infty  \frac{(-1)^{k+1}(s-\rho)^{2k+1}}{(2k+1)T^{2k+1}} , \\
\int_{t=-T}^T \frac{dt}{s-1+\rho+it} &=& \pi + 2 \sum_{k=0}^\infty  \frac{(-1)^{k+1}(s-1+\rho)^{2k+1}}{(2k+1)T^{2k+1}}.
\ea
Eq. \eqref{eq4444} then becomes
\ba
I(\rho,T,a) &=& \frac{1}{2\pi i} \sum_{n=1}^\infty \sigma_a(n) \int_{\substack{\Re(s)=\sigma\\|\Im(s)|<T_0}}  \Bigg[ \frac{g(s)g(s-a)}{n^s} \lb  \pi + 2 \sum_{k=0}^\infty  \frac{(-1)^{k+1}(s-\rho)^{2k+1}}{(2k+1)T^{2k+1}} \rb \nn \\
&+& \frac{g(s)g(s+a)}{n^{s+a}} \lb \pi + 2 \sum_{k=0}^\infty  \frac{(-1)^{k+1}(s-1+\rho)^{2k+1}}{(2k+1)T^{2k+1}} \rb \Bigg] ds +I^\mrm{nonpert,1}(\rho,T_0,T,a).
\ea
With the notation
\begin{align}
I^\mrm{nonpert,2}(\rho,T_0,T,a) &\coloneqq \frac{1}{2\pi i} \sum_{n=1}^\infty \sigma_a(n) \int_{\substack{\Re(s)=\sigma\\|\Im(s)|>T_0}}  \bigg[ \frac{g(s)g(s-a)}{n^s} \bigg(  \pi + 2 \sum_{k=0}^\infty  \frac{(-1)^{k+1}(s-\rho)^{2k+1}}{(2k+1)T^{2k+1}} \bigg) \nn \\
&\quad + \frac{g(s)g(s+a)}{n^{s+a}} \bigg( \pi + 2 \sum_{k=0}^\infty  \frac{(-1)^{k+1}(s-1+\rho)^{2k+1}}{(2k+1)T^{2k+1}} \bigg) \bigg] ds
\end{align}
we arrive at
\ba
I(\rho,T,a) &=& \frac{1}{2\pi i} \sum_{n=1}^\infty \sigma_a(n) \int_{\Re(s)=\sigma}  \Bigg[ \frac{g(s)g(s-a)}{n^s} \lb  \pi + 2 \sum_{k=0}^\infty  \frac{(-1)^{k+1}(s-\rho)^{2k+1}}{(2k+1)T^{2k+1}} \rb \nn \\
&+& \frac{g(s)g(s+a)}{n^{s+a}} \lb \pi + 2 \sum_{k=0}^\infty  \frac{(-1)^{k+1}(s-1+\rho)^{2k+1}}{(2k+1)T^{2k+1}} \rb \Bigg] ds \nn\\
&+& I^\mrm{nonpert,1}(\rho,T_0,T,a) - I^\mrm{nonpert,2}(\rho,T_0,T,a).
\label{eq411comp}
\ea
Because $I(\rho,T,a)$ is the integral of $\xi(s)\xi(s-a)$ on a vertical line in a right half-plane (see Eq.~\eqref{eq41start}), for $\rho>a+1$ it also has a presentation as
\ba
I(\rho,T,a) &=& \int_{t=-T}^T \lb \sum_{n=1}^\infty \sigma_a(n) \frac{g(\rho+it)g(\rho+it-a)}{n^{\rho+it}} \rb dt \nn \\
&=& \int_{t=-\infty}^\infty  \lb \sum_{n=1}^\infty \sigma_a(n) \frac{g(\rho+it)g(\rho+it-a)}{n^{\rho+it}} \rb dt - I^\mrm{nonpert,3}(\rho,T,a),
\label{eq412comp}
\ea
where 
\be
I^\mrm{nonpert,3}(\rho,T,a) \coloneqq \lb \int_{t=-\infty}^{-T} + \int_{t=T}^{\infty} \rb \bigg( \sum_{n=1}^\infty \sigma_a(n) \frac{g(\rho+it)g(\rho+it-a)}{n^{\rho+it}} \bigg) dt.
\ee
Let now $T_0 = \sqrt{T}$, and note that the terms $I^\mrm{nonpert,1,2}(\rho,\sqrt{T},T,a)$, $I^\mrm{nonpert,3}(\rho,T,a)$ do not contribute at any finite order in the Taylor series in $1/T$, in the sense that they do not have a $1/T^m$ contribution for any integer $m>0$. Furthermore, from Fubini's theorem we can interchange the integral and summations in Eq. \eqref{eq411comp}.  Reconciling the Taylor series expansion in $1/T$ of Eqs.~\eqref{eq411comp}, \eqref{eq412comp}, we thus arrive at
\ba
\label{constraint0}
\frac{1}{2} \sum_{n=1}^\infty \sigma_a(n) \int_{\Re(s)=\sigma}  \Bigg[ \frac{g(s)g(s-a)}{n^s} 
+ \frac{g(s)g(s+a)}{n^{s+a}}\Bigg] ds = \int_{\Re(s)=\rho}  \lb \sum_{n=1}^\infty \sigma_a(n) \frac{g(s)g(s-a)}{n^{s}} \rb ds \nn\\
\ea
and, for all integers $k\geq 0$,
\ba
Q(\rho,a,k) &\coloneqq& \sum_{n=1}^\infty \sigma_a(n) \int_{\Re(s)=\sigma}  \Bigg[ \frac{g(s)g(s-a)}{n^s} (s-\rho)^{2k+1} + \frac{g(s)g(s+a)}{n^{s+a}} (s-1+\rho)^{2k+1} \Bigg] ds \nn \\
&=&0.
\label{condition415}
\ea

Eq. \eqref{constraint0} is automatically satisfied, because one can perform the variable change $s\to s-a$ and then move the $\Re(s)=\rho$ line to $\Re(s) = \sigma+a$.  The constraints follow from Eq. \eqref{condition415}, $Q(\rho,a,k)=0$, which, using that $2k+1$ is odd, is
\be
\label{eq416constrs}
\sum_{n=1}^\infty \sigma_a(n) \int_{\Re(s)=\sigma} \sum_{l=0}^{2k+1} \binom{2k+1}{l} \Bigg[ - \frac{g(s)g(s-a)}{n^s} (-s)^{l} + \frac{g(s)g(s+a)}{n^{s+a}} (s-1)^{l} \Bigg] \rho^{2k+1-l} ds =0.
\ee
In Eq. \eqref{eq416constrs} $\rho$ is a free parameter, so the coefficient multiplying each power of $\rho$ must vanish. Using Fubini's theorem, we must thus have
\be
\label{eq417}
\sum_{n=1}^\infty \sigma_a(n) \int_{\Re(s)=\sigma} \Bigg[ - \frac{g(s)g(s-a)}{n^s} (-s)^{k} + \frac{g(s)g(s+a)}{n^{s+a}} (s-1)^{k} \Bigg] ds =0,
\ee
for all integers $k\geq 0$.

Performing the contour integral, for $a=1$ the set of constraints \eqref{eq417} is equivalent to 
\be
\sum_{n=1}^\infty \sigma_1(n) \lsb (2\pi n)^{k+1} \, _1F_1(-k-3;k+2;2\pi n)-\frac{(2\pi n)^{k+2} \,
   _1F_1(-k-2;k+3;2\pi n)}{k+2} \rsb e^{-2\pi n} = 0,
\ee
for all integers $k\geq0$. For $a=3$ the constraints \eqref{eq417} are equivalent to
\be
\sum_{n=1}^\infty \sigma_3(n) \lsb (2\pi n)^{k+1} \,
   _1F_1(-k-2;k+5;2\pi n)-\frac{ (2\pi n)^{k+2} \,_1F_1(-k-1;k+6;2\pi n)}{k+5} \rsb e^{-2\pi n } =0,
\ee
and for $a=5$ they are equivalent to
\be
\sum_{n=1}^\infty \sigma_5(n) \lb 2\pi n \rb^{k+1} \, _1F_1(-k-1;k+7;2\pi n) \, e^{-2\pi n } = 0,
\ee
for all integers $k\geq 0$. This completes the proof.
\end{proof}

As a byproduct of our work, we obtain the results in Corollary \ref{cor3}. Note that in each case, the summand is positive for all integers $n\geq 1$.

\begin{cor}
\label{cor3}
With $\sigma_a(n)$ the divisor function as defined in Eq. \eqref{sigmaandefn}, we have
\ba
\label{eq419const1}
\sum_{n=0}^\infty \sigma_1(n) \lsb 2 \pi  n (2\pi n-3)+1\rsb e^{-2\pi n} &=& \frac{1}{24}, \\
\sum_{n=1}^\infty \sigma_3(n) \lb \pi n - 1 \rb e^{-2\pi n} &=& \frac{1}{240}, \\
\label{eq419const3}
\sum_{n=1}^\infty \sigma_5(n) e^{-2\pi n } &=& \frac{1}{504}.
\ea
\end{cor}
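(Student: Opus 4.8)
The plan is to evaluate the integral representation \eqref{eq336} at the single point $s_0=0$, separately for $a=1,3,5$. The first step is to recognize that, by the same manipulation used in the proof of Lemma~\ref{lemmma3p4} (the substitution $s\to s-a$ in the second term together with the duplication formulas for $g(s)g(s\mp a)$ and Lemma~\ref{lemmamellins}), the inner $s$-integral in \eqref{eq336} is exactly $J_a(N,s_0)+J_a(N,a+1-s_0)$. Hence \eqref{eq336} reads
\[
\xi(s_0)\xi(s_0-a)=\sum_{n=1}^\infty \sigma_a(n)\lsb J_a(N,s_0)+J_a(N,a+1-s_0)\rsb .
\]
Setting $s_0=0$ makes the two evaluation points $0$ and $a+1$, both non-negative (even) integers, at which the exponential-integral functions $E_{-s_0}(2\pi n)$ and the incomplete Gamma functions $\Gamma(s_0+j,2\pi n)$ in Lemma~\ref{lemmamellins} truncate to elementary expressions, namely finite sums of $e^{-2\pi n}$ times powers of $2\pi n$. (The $n=0$ term in the first identity is to be read as vanishing, $\sigma_1(0)=0$.)

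The key computation is then to verify that $J_a(N,0)+J_a(N,a+1)$ collapses, after the negative powers of $2\pi n$ cancel between the two summands, to precisely the target kernel times a constant, i.e.
\[
2\pi\lsb 2\pi n(2\pi n-3)+1\rsb e^{-2\pi n},\qquad 8\pi^2(\pi n-1)e^{-2\pi n},\qquad 8\pi^3 e^{-2\pi n}
\]
for $a=1,3,5$ respectively. For $a=5$ the cancellation of the inverse powers of $n$ is total and leaves a pure constant, while for $a=1,3$ it leaves exactly the low-degree polynomial appearing in the statement.

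It remains to evaluate the left-hand sides in closed form. Using the functional equation $\xi(s)=\xi(1-s)$, the value $\xi(0)=\tfrac12$, and the definition of $g$ together with $\zeta(2)=\pi^2/6$, $\zeta(4)=\pi^4/90$, $\zeta(6)=\pi^6/945$, one gets $\xi(2)=\tfrac{\pi}{6}$, $\xi(4)=\tfrac{\pi^2}{15}$, $\xi(6)=\tfrac{2\pi^3}{63}$, whence
\[
\xi(0)\xi(-1)=\tfrac12\xi(2)=\tfrac{\pi}{12},\qquad \xi(0)\xi(-3)=\tfrac12\xi(4)=\tfrac{\pi^2}{30},\qquad \xi(0)\xi(-5)=\tfrac12\xi(6)=\tfrac{\pi^3}{63}.
\]
Dividing each of these values by the constant ($2\pi$, $8\pi^2$, $8\pi^3$) extracted in the previous step yields $\tfrac{1}{24}$, $\tfrac{1}{240}$, $\tfrac{1}{504}$, which are the three claimed identities.

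The main obstacle is the exact collapse of $J_a(N,0)+J_a(N,a+1)$ to the stated elementary kernel. This relies on the integer choice $s_0=0$, which forces the transcendental $E$- and incomplete-Gamma pieces to terminate, combined with the reflection symmetry $s_0\leftrightarrow a+1-s_0$ of the two terms, which is precisely what cancels the residual inverse powers of $n$. Once that algebraic identity is in hand, the remainder is routine bookkeeping with the functional equation and the even zeta values.
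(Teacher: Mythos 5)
Your proposal is correct and follows exactly the route the paper itself indicates (the paper only sketches it in one sentence: apply Eq.~\eqref{eq336} at $s_0=0$ and compute the inverse Mellin transforms); your filled-in details check out, since $J_a(N,0)+J_a(N,a+1)$ does collapse to $2\pi\lsb 2\pi n(2\pi n-3)+1\rsb e^{-2\pi n}$, $8\pi^2(\pi n-1)e^{-2\pi n}$, and $8\pi^3 e^{-2\pi n}$ for $a=1,3,5$, and the special values $\xi(0)\xi(-a)=\tfrac12\xi(a+1)$ are as you state. The observation that the vanishing of the $(s_0-4)$ and $(s_0-6)$ prefactors at $s_0=a+1$, together with the termination of $E_{-s_0}$ and the incomplete Gamma functions at integer arguments, is what drives the cancellation is a nice explicit articulation of what the paper leaves implicit.
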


Eqs. \eqref{eq419const1} -- \eqref{eq419const3} can be derived by applying our main result in Eq. \eqref{eq336} at $s_0=0$ and then computing the inverse Mellin transform. We note that the left-hand sides of expressions \eqref{eq419const1} -- \eqref{eq419const3} formally match the left-hand sides of the constraints \eqref{eq41theseareconstraints} for $k=-1$.

As we will see in the next section, constraints of the type \eqref{eq419const1} -- \eqref{eq419const3} can be put forward also for odd $a>5$. However, the summands multiplying the divisor function become more complicated as $a$ increases past $a=5$.

\section{Final remarks}
\label{secfinremarks}

For the purposes of illustration we have focused on the cases $a \in \{1,3,5\}$ and produced the exponential sums twisted by generalized divisor functions appearing in Theorem~\ref{thm3ishere} and  Corollary~\ref{cor3}. However, it should be clear that many of the results in this paper can be conjectured to generalize beyond the cases we have proven.

\begin{conj}
For all $s_0\in\mathbb{C}$ and all $a \in \mathbb{C}$ we have
\be
\label{eq51}
\xi(s_0) \xi(s_0 - a) = \frac{1}{2\pi i} \sum_{n=1}^\infty \int_{\Re(s)=\sigma}  \sigma_a(n)  \lsb \frac{g(s)g(s-a)}{s-s_0} \frac{1}{n^s}  + \frac{g(s)g(s+a)}{s-1+s_0} \frac{1}{n^{s+a}} \rsb ds
\ee
for  $\sigma>\max\lb\left|\Re(a)\right|-2,\, \Re(s_0),\,\Re(1-s_0) \rb$ (so that all the poles are to the left of the integration line).
\end{conj}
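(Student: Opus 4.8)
The plan is to reuse the architecture that proved Eq.~\eqref{eq336} for $a=1,3,5$, replacing every appearance of the duplication formula \eqref{eq319dupformula} (which needed $a$ odd) by a uniform asymptotic analysis valid for all $a\in\CC$. The crucial observation is that the symmetrized functional-equation identity \eqref{eq212},
\be
\xi(s_0)\xi(s_0-a)=\frac{1}{2\pi i}\int_{\Re(s)=\sigma}\lsb\frac{\xi(s)\xi(s-a)}{s-s_0}+\frac{\xi(s)\xi(s+a)}{s-1+s_0}\rsb ds,
\ee
already holds for arbitrary complex $a$ and any $\sigma>\max(\Re(s_0),\Re(1-s_0))$: its derivation uses only $\xi(s)\xi(s-a)=\xi(1-s)\xi(1-s+a)$ together with the vertical decay of the Gamma factors, neither of which is sensitive to the arithmetic of $a$.

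First I would establish \eqref{eq51} on a far-right line. Fix $s_0$ and take $\sigma>\max(1,|\Re(a)|+1,\Re(s_0),\Re(1-s_0))$, so that both $\sum_n\sigma_a(n)n^{-s}$ and $\sum_n\sigma_{-a}(n)n^{-s}$ converge absolutely on the line. Expanding $\xi(s)\xi(s-a)=g(s)g(s-a)\sum_n\sigma_a(n)n^{-s}$ and $\xi(s)\xi(s+a)=g(s)g(s+a)\sum_n\sigma_{-a}(n)n^{-s}$, using $\sigma_{-a}(n)n^{a}=\sigma_a(n)$, and interchanging sum and integral by Fubini exactly as in the passage from \eqref{eq338sumint} onward, produces \eqref{eq51} on this line. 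I would then slide the contour leftward to any $\sigma>\max(|\Re(a)|-2,\Re(s_0),\Re(1-s_0))$. Term by term this is harmless: the poles of the $n$-th summand sit at $s=s_0$, at $s=1-s_0$, and at the Gamma poles of $g(s\mp a)$, located at $\Re(s)=\Re(a)-2,\Re(a)-4,\dots$ and $\Re(s)=-\Re(a)-2,-\Re(a)-4,\dots$; the rightmost of these lie at $\Re(s)=\pm\Re(a)-2$, so the bound $\sigma>|\Re(a)|-2$ is exactly what keeps the line to the right of all of them, matching the condition in the conjecture.

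The main obstacle is the convergence of the $n$-sum once the line has been pushed below the abscissa of absolute convergence. For $a=1,3,5$ this was controlled by \eqref{eq319dupformula}, which collapses $\Gamma(\tfrac{s}{2}+1)\Gamma(\tfrac{s-a}{2}+1)$ into a single $\Gamma(s+1)$ with rational factors, giving the closed-form incomplete-Gamma evaluations of Lemma~\ref{lemmamellins} and the clean $e^{-2\pi n}$ decay of Lemma~\ref{lemmma3p3}. For generic $a$ no such collapse occurs, and the kernel
\be
\frac{1}{2\pi i}\int_{\Re(s)=\sigma}g(s)g(s-a)\,n^{-s}\,ds
\ee
is instead a modified Bessel function of order $a/2$: the substitution $u=s/2$ turns the integrand into $\Gamma(u+1)\Gamma(u+1-\tfrac{a}{2})(\pi n)^{-2u}$ up to polynomial and constant factors, whose inverse Mellin transform is governed by the Mellin pair of $K_{a/2}(2\pi n)$. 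Since $K_\nu(2\pi n)\ll_\nu e^{-2\pi n}$ as $n\to\infty$, each summand still decays exponentially, and combined with $\sigma_a(n)\ll n^{\max(\Re(a),0)+\eps}$ this yields absolute convergence on the shifted line. The hard part is to make the exponential bound \emph{uniform} in $a$ on compact sets and to track the polynomial-in-$n$ and polynomial-in-$a$ prefactors together with the contributions of the simple poles $1/(s-s_0)$ and $1/(s-1+s_0)$; this is precisely the place where the explicit computer-algebra output of Lemma~\ref{lemmamellins} must be replaced by a genuine uniform Bessel asymptotic.

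Once this decay is in hand the proof closes immediately: the term-by-term contour shift gives $\int_{\Re(s)=\sigma}[\cdots]_n\,ds=\int_{\Re(s)=\sigma'}[\cdots]_n\,ds$ for every $n$, and since both the large-$\sigma'$ sum (Fubini) and the shifted-$\sigma$ sum (Bessel decay) converge, summing the term-wise equalities transports \eqref{eq51} from the far-right line to the desired one. As an independent cross-check, and to relax the uniformity demands, one may observe that for fixed $s_0$ and $\sigma$ the left-hand side $\xi(s_0)\xi(s_0-a)$ is entire in $a$ while the uniform exponential bound makes the right-hand side holomorphic in $a$ on $\{|\Re(a)|<\sigma+2\}$; agreement on the nonempty open subregion of absolute convergence then forces agreement throughout, and letting $\sigma\to\infty$ exhausts all of $\CC$.
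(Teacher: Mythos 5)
First, a point of orientation: the paper does not prove this statement at all --- it is stated as a conjecture and left open. The authors prove only the cases $a=1,3,5$ (the corollary containing Eq.~\eqref{eq336}), and your proposal correctly identifies and reuses the architecture of that proof: the symmetrized functional-equation identity \eqref{eq212}, Dirichlet expansion plus Fubini on a far-right line $\Re(s)=\sigma'$, then a leftward contour shift. So there is no paper proof to compare against; the question is whether your argument closes the gap the authors left open.

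It essentially does, but for a reason you do not seem to notice: the step you single out as ``the hard part'' --- a uniform-in-$a$ Bessel asymptotic controlling the $n$-sum on the shifted line --- is not needed for the statement as written. In Eq.~\eqref{eq51} the sum over $n$ sits \emph{outside} the integral. Write $I_n(\sigma)$ for the $n$-th integral on the line $\Re(s)=\sigma$. For fixed $n$ the integrand is meromorphic with all poles strictly to the left of both lines under the stated hypothesis on $\sigma$ (your pole inventory omits the poles of $g(s)$ itself at $s=-2,-4,\dots$, but these are covered as well, since $|\Re(a)|-2\geq -2$ and $\max(\Re(s_0),\Re(1-s_0))\geq 1/2$), and the horizontal segments of the connecting rectangle vanish as $|\Im(s)|\to\infty$ because the Gamma decay beats the factor $n^{-\Re(s)}$, which is bounded on a compact strip for fixed $n$. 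Hence $I_n(\sigma)=I_n(\sigma')$ exactly, so the partial sums $\sum_{n\leq N}I_n(\sigma)$ and $\sum_{n\leq N}I_n(\sigma')$ are literally equal for every $N$; since the latter converge to $\xi(s_0)\xi(s_0-a)$ by your Fubini step, so do the former. Convergence on the shifted line is transported automatically by term-wise equality; no decay estimate for the individual kernels, uniform or otherwise, is required. The Bessel analysis would only be needed for genuinely stronger claims --- absolute convergence of the $n$-sum at small $\sigma$, or holomorphy in $a$ of the right-hand side, which is what your final analytic-continuation ``cross-check'' requires; that cross-check, not the main argument, is where your admitted gap actually lives, and it can simply be deleted. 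With the superfluous step removed, your argument is a complete proof of what the paper only conjectures.
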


\begin{conj}[General form of the constraints]
For $a$ positive odd integer and $k\geq0$ integer we~have
\be
\sum_{n=1}^\infty \sigma_a(n) \int_{\Re(s)=\sigma} \Bigg[ - \frac{g(s)g(s-a)}{n^s} (-s)^{k} + \frac{g(s)g(s+a)}{n^{s+a}} (s-1)^{k} \Bigg] ds =0,
\ee
where the integration contour is to the right of the poles.
\end{conj}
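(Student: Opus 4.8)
The plan is to re-run the argument in the proof of Theorem~\ref{thm3ishere}, observing that its derivation of Eq.~\eqref{eq417} never invokes the specific values $a\in\{1,3,5\}$: those values enter only at the very last step, where one ``performs the contour integral'' using the explicit inverse Mellin transforms of Lemma~\ref{lemmamellins} to produce the closed hypergeometric forms. Since the conjecture is precisely Eq.~\eqref{eq417} for an arbitrary positive odd integer $a$, it suffices to check that every step up to and including \eqref{eq417} survives uniformly in $a$. The only two ingredients that require genuinely $a$-dependent input are the integral representation \eqref{eq336} and the decay estimates controlling the $1/T$ tails, so I would establish these first and then let the rest of the mechanism run unchanged.

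First I would obtain \eqref{eq336} for a general positive odd $a$ in the region of absolute convergence $\sigma>a+1$. The integral-formula theorem that yields Eq.~\eqref{eq32new} is stated and proved for every odd positive $a$, so Eq.~\eqref{eq212} is available for such $a$. For $\sigma>a+1$ one substitutes the absolutely convergent Dirichlet series $\xi(s)\xi(s-a)=g(s)g(s-a)\sum_n\sigma_a(n)n^{-s}$ and $\xi(s)\xi(s+a)=g(s)g(s+a)\sum_n\sigma_a(n)n^{-s-a}$ (the latter using $\sigma_{-a}(n)=n^{-a}\sigma_a(n)$) into \eqref{eq212}, and interchanges the sum with the integral by Fubini, the exchange being licensed by the exponential decay of the gamma factors in $g$ together with $\sum_n\sigma_a(n)n^{-\sigma}<\infty$. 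This reproduces \eqref{eq336} with the contour at any $\sigma>a+1$.

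Next I would repeat the body of the proof of Theorem~\ref{thm3ishere} with $\sigma>\rho>a+1$, keeping both representations of $I(\rho,T,a)$ inside the half-plane of absolute convergence. Performing the inner $t$-integrals, expanding the logarithms in $1/T$, setting $T_0=\sqrt{T}$, and comparing the Taylor coefficients in $1/T$ of Eqs.~\eqref{eq411comp} and \eqref{eq412comp} yields the automatically-satisfied identity \eqref{constraint0} at order $T^0$ and the relations $Q(\rho,a,k)=0$ of Eq.~\eqref{condition415} at each higher order; extracting the coefficient of each power of $\rho$ then gives \eqref{eq417} for all $k\geq0$. Finally, the duplication formula \eqref{eq319dupformula} shows that $g(s)g(s-a)$ has no poles with $\Re(s)>a-2$ and that $g(s)g(s+a)$ has only poles of negative real part, so the integral of each summand in \eqref{eq417} is independent of $\sigma$ throughout the strip $a-2<\Re(s)<\infty$; shifting the contour from $\sigma>a+1$ down to the stated range $\sigma>a-2$ crosses no poles and preserves the identity.

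The main obstacle is the uniform-in-$a$ analytic bookkeeping in the $1/T$ comparison. One must show that the nonperturbative pieces $I^{\mrm{nonpert},1,2,3}$ decay faster than any power of $1/T$; this rests on the gamma factors in $g(s)g(s\mp a)$ decaying like $e^{-c|\Im(s)|}$ on $|\Im(s)|>T_0=\sqrt{T}$ (for $I^{\mrm{nonpert},1,2}$) and on $|t|>T$ (for $I^{\mrm{nonpert},3}$), which gives $e^{-c\sqrt{T}}=o(T^{-m})$ for every $m$. As $a$ grows the divisor sum $\sigma_a(n)\asymp n^a$ grows faster, so one must hold $\sigma>a+1$ to retain absolute convergence and verify that this exponential decay continues to dominate the polynomial factors $(s-\rho)^{2k+1}$ uniformly; carrying out these estimates, and confirming that all the Fubini interchanges of the $n$-sum with the $s$- and $t$-integrals remain valid, is the technical heart of a fully rigorous argument.
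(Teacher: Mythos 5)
The first thing to note is that the paper does \emph{not} prove this statement: it appears only as a conjecture (the ``general form of the constraints''), so there is no proof of record to compare against. What the paper proves is Theorem~\ref{thm3ishere}, i.e.\ the cases $a\in\{1,3,5\}$, and your proposal is precisely an attempt to promote that proof to all odd positive $a$. Your diagnosis of where the $a$-dependence actually sits is correct: the body of the proof of Theorem~\ref{thm3ishere} (the $t$-integration, the $1/T$ expansion, the matching of coefficients, the extraction of powers of $\rho$ leading to \eqref{eq417}) never uses $a\in\{1,3,5\}$. Those values enter only through the input \eqref{eq336}, whose proof in the paper runs through the approximation machinery (Lemma~\ref{lemmamellins} through Theorem~\ref{thm2mainthm}), established only for $a=1,3,5$ via explicit computer-algebra Mellin transforms, and through the final conversion of \eqref{eq417} into hypergeometric form, which the conjecture does not require. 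Your replacement input --- deriving \eqref{eq336} for every odd positive $a$ directly from \eqref{eq212} (which is proved inside the integral-formula theorem, Eq.~\eqref{eq32new}, for every odd positive $a$) by inserting the absolutely convergent Dirichlet series at $\sigma>a+1$ and applying Fubini with the exponential decay of the gamma factors --- is sound, and it is exactly the new idea needed; the concluding contour shift down to $\sigma>a-2$, justified by the pole locations visible in \eqref{eq319dupformula}, is also correct. So your route is the natural proof of the conjecture, and it genuinely bypasses the one part of the paper's architecture that does not generalize automatically.

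One caveat, which you flag but understate: the ``technical heart'' you defer is not merely uniform-in-$a$ bookkeeping. As written in the paper (and inherited by your plan), the decomposition \eqref{eq411comp} uses $I^{\mathrm{nonpert},2}$, defined by integrating the \emph{infinite} $1/T$ series over $|\Im(s)|>T_0$, a region containing points where that series diverges (namely $|s-\rho|>T$); moreover, the full-line coefficients in \eqref{condition415} grow factorially in $k$, so the resulting series in $1/T$ is at best asymptotic, not convergent. A rigorous version should expand the logarithms to a \emph{finite} order $K$ with an explicit remainder $O\big(|s-\rho|^{2K+3}/T^{2K+3}\big)$, match coefficients up to order $T^{-(2K+1)}$, and let $K$ be arbitrary; this yields \eqref{condition415}, hence \eqref{eq417}, for each fixed $k$. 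With that repair --- which is needed for the paper's own Theorem~\ref{thm3ishere} just as much as for your generalization --- your proposal goes through.
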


\iffalse
\bs{More conjectures: 1. The constraints are always given by Eq. \eqref{eq417} for odd $a$. 2. The $s_0=0$ constraints are rational functions in $2\pi n$ for all $a\geq 7$ odd. 3. The constraints/main formula at $s_0 = 0$, expressed in terms of $x=2\pi n$, always give a rational, at least for odd $a$.}
\fi

When $a$ is an even integer, divisor function constraint relations of the type in Theorem \ref{thm3ishere} still hold, however now the summation kernel is a combination of higher order transcendental functions, rather than a rational function times a decaying exponential. This happens because the poles entering the computation of the inverse Mellin transform are now double poles. We present below an example of such a relation, for $a=0$. In this case $\sigma_0(n)=d(n)$, the number of divisors of $n$.

\begin{problem}
\label{conj2}
For $a=0$ we have
\be
\label{eq51conject}
\sum_{n=1}^\infty \sigma_0(n) n^2 \left[\left(84 \pi^2 n^2+45\right) K_0(2 \pi n )-16 \pi  n \left(\pi ^2 n^2+6\right) K_1(2\pi n )\right]  = 0,
\ee
with $K_\alpha(z)$ the modified Bessel function of the second kind.
\end{problem}

Note that when $n$ is large the summand in Eq. \eqref{eq51conject} asymptotically decreases as $e^{-2\pi n}$ times a polynomial of degree $9$ in~$\sqrt{n}$. As numerical evidence, we present in Table \ref{ourtable} values of the sum in Eq. \eqref{eq51conject}, computed with $n$ running from $1$~up to a large integer $N$, for several values of $N$ up to $N=130$. We denote this partial sum by $S_{\sigma_0}(N)$. It can be been from the table that as $N$ increases the sum has rapid decay.

\begin{table}[htp]
\begin{tabular}{c||c|c|c|c|c}
	$N$ & 10 & 40 & 70 & 100 & 130\\
	\hline
	$S_{\sigma_0(N)}$ & $2.0\cdot 10^{-23}$ & $1.1\cdot 10^{-102}$ & $1.9\cdot 10^{-183}$  & $1.3 \cdot 10^{-264}$  & $5.7\cdot 10^{-346}$
\end{tabular}
\caption{The sum in Eq. \eqref{eq51conject}, considered up to a large but finite value $N$ of the summation upper bound.\label{ourtable}}
\end{table}

Similar relations to Eq. \eqref{eq51conject} can be derived for odd values of $a$ as well, however for $a>5$ odd the functions that appear in the exponential sums become rational, rather than polynomial. We give several examples below.

\begin{problem} 
For $a=7,9,11$  one has
\begin{align}
	\sum_{n=1}^\infty \frac{\sigma_7(n)e^{-2 \pi  n}}{n^5} \lb 16 \pi ^6 n^6-32 \pi ^5 n^5-60 \pi ^4 n^4-90 \pi ^3
   n^3-105 \pi ^2 n^2-90 \pi  n-45\rb &= 0,\nonumber \\
	\sum_{n=1}^\infty \frac{\sigma_9(n) e^{-2 \pi  n} }{n^7}
 \left(32 \pi ^7 n^7-168 \pi ^5 n^5-588 \pi ^4 n^4-1260 \pi ^3 n^3-1890 \pi ^2
   n^2-1890 \pi  n-945\right) &= 0,\nn \\
    \sum_{n=1}^\infty \frac{\sigma_{11}(n) e^{-2 \pi  n} }{n^9}\big( 64 \pi ^8 n^8+160 \pi ^7 n^7-96 \pi ^6 n^6-2016 \pi ^5 n^5-7980 \pi ^4 n^4 \nonumber \\
 \ -19530 \pi ^3 n^3-32445 \pi ^2 n^2-34020 \pi  n-17010  \big) &= 0.
\end{align}
\end{problem}

\begin{problem}
For $a=7,9,11$ one has
\begin{align}
	\sum_{n=1}^\infty \frac{\sigma_7(n)e^{-2\pi n}}{\pi^5 n^5} \left(\pi ^2 n^2+1\right) \left(4 \pi ^2 n^2+6 \pi n+3\right) &= \frac{1}{225}, \nonumber \\
	\sum_{n=1}^\infty \frac{\sigma_9(n)e^{-2\pi n}}{\pi^7 n^7} \lb 8 \pi ^5 n^5+28 \pi ^4 n^4+60 \pi ^3 n^3+90 \pi ^2 n^2+90\pi  n+45 \rb &= \frac{4}{693}, \nonumber \\
	\sum_{n=1}^\infty \frac{\sigma_{11}(n)e^{-2\pi n}}{\pi^9 n^9} \big( 16 \pi ^6 n^6+96 \pi ^5 n^5+324 \pi ^4 n^4+750 \pi ^3 n^3\nn\\
\ +1215 \pi ^2 n^2+1260 \pi  n+630 \big) &= \frac{11056}{1289925}.
\end{align}
\end{problem}

Finally, we remark that numerically it appears to be the case that the family of constraints in Eq.~\eqref{eq41theseareconstraints} uniquely determine the divisor functions $\sigma_{1,3,5}(n)$ for all $n>1$. This provides an alternative method for computing the divisor functions, without employing divisibility techniques coming from the definition of $\sigma_a$.

The procedure is as follows. Suppose we are interested in determining the function $\sigma_1(n)$ for all positive integers $2\leq n\leq M$, for a large integer $M$. Relations \eqref{eq41theseareconstraints} for $a=1$ are a set of linear constraints between all values of the divisor function $\sigma_1(n)$. Each relation involves values $\sigma_1(n)$ for all $n$, however one can truncate in an advantageous manner to $n\leq N$ for an integer $N>M$. Then consider the $M-1$ relations corresponding to $k=0,\dots, M-2$ and solve for the divisor function values $\sigma_1(n)$, $ 2\leq n\leq M$ by inverting the linear system of equations. Doing so will yield approximate values for $\sigma(n)$, with $2\leq n\leq N$, however numerically one can observe that these approximate values converge to the correct ones as $N$ is increased (with $M$ held fixed). In particular, these approximate values tend to integer values.

To be more concrete, relations \eqref{eq41theseareconstraints} for $a=1$ are of the form
\be
\lcb 1 + \sum_{n=2}^N \sigma_1(n) Q_{k}(2\pi n) e^{-2\pi n} + \sum_{n=N+1}^\infty \sigma_1(n) Q_{k}(2\pi n) e^{-2\pi n} = 0 \rcb_{k\geq 0}
\ee
where we have normalized by the first term such that 
\be
Q_k(2\pi n) \coloneqq \frac{P_{k}(2\pi n)}{P_{k}(2\pi) e^{-2\pi}}.
\ee
We can solve this system for $\sigma_1(n)$ for $1<n\leq N$ by matrix inversion, obtaining the solutions as
\be
\label{eq56bold}
\boldsymbol{\sigma}_1 = - {\bf Q}_N^{-1} \lb {\bf 1} + {\bf e}_N \rb, 
\ee
where $\mathbf{Q}^{-1}_N$ is the inverse of matrix
\be
\label{eq5p7matrix}
\mathbf{Q}_N \coloneqq \lcb Q_k(2\pi n) e^{-2\pi n} \rcb_{\substack{0\leq k \leq N-2\\ 1< n \leq N}}
\ee
and $\mathbf{e}_N$ is the vector 
\be
\mathbf{e}_N \coloneqq \lcb \sum_{n=N+1}^\infty \sigma_1(n) Q_k(2\pi n) e^{-2\pi n} \rcb_{0\leq k\leq N-2}. 
\ee
Note that in order for Eq. \eqref{eq56bold} to make sense, matrix ${\bf Q}_N$ must be invertible. We will prove this in Lemma~\ref{lemmadeterm} below.

We can approximate Eq. \eqref{eq56bold} by ignoring the term proportional to ${\bf e}_N$ and writing
\be
\boldsymbol{\sigma}^\mrm{approx}_1 = - {\bf Q}_N^{-1} {\bf 1},
\ee
where $\boldsymbol{\sigma}^\mrm{approx}_1$ is a vector that will approximate the divisor function values in the following precise sense. Consider the first $M-1$ values in $\boldsymbol{\sigma}^\mrm{approx}_1$: as $N$ is increased (with $M$ held fixed) these first $M-1$ values will approach the true values of the divisor function $\sigma_1(n)$, $2\leq n\leq M$.

We formalize this observation in the following conjecture.

\begin{conj}
\label{conj4inversion}
Let integers $M,N$ and matrix ${\bf Q}_N$ as above. Then the first $M-1$ entries in
\be
\label{eq510}
- {\bf Q}_N^{-1} {\bf 1}
\ee
tend to $\sigma_1(n)$, $2 \leq  n\leq M$, as $N\to\infty$.
\end{conj}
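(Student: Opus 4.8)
The plan is to recast the conjecture as a perturbation statement and isolate the single analytic estimate on which everything hinges. By Theorem \ref{thm3ishere} the true sequence $\sigma_1(n)$ satisfies every constraint exactly, so (using $\sigma_1(1)=1$, which produces the normalized leading $1$) the $k$-th relation reads $1+\sum_{n=2}^N \sigma_1(n)\,\mathbf{v}_n[k]+\mathbf{e}_N[k]=0$, where $\mathbf{v}_m\coloneqq\big(Q_k(2\pi m)e^{-2\pi m}\big)_{0\le k\le N-2}$ is the column attached to index $m$ and $\mathbf{e}_N[k]=\sum_{m>N}\sigma_1(m)\,\mathbf{v}_m[k]$ is the genuine tail. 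Hence $\mathbf{Q}_N\,\boldsymbol{\sigma}_1^{(2:N)}=-\mathbf{1}-\mathbf{e}_N$ with $\boldsymbol{\sigma}_1^{(2:N)}\coloneqq(\sigma_1(2),\dots,\sigma_1(N))$, while the scheme produces $\boldsymbol{\sigma}_1^{\mathrm{approx}}=-\mathbf{Q}_N^{-1}\mathbf{1}$. Subtracting gives the exact error
\[
\boldsymbol{\sigma}_1^{\mathrm{approx}}-\boldsymbol{\sigma}_1^{(2:N)}=\mathbf{Q}_N^{-1}\mathbf{e}_N ,
\]
so the whole conjecture reduces to showing that, for each fixed $n$ with $2\le n\le M$, the $n$-th entry of $\mathbf{Q}_N^{-1}\mathbf{e}_N$ tends to $0$ as $N\to\infty$. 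Invertibility of $\mathbf{Q}_N$ I take from Lemma \ref{lemmadeterm}.

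Next I would make this entry completely explicit through Cramer's rule together with multilinearity of the determinant in the single replaced column. Since $\mathbf{e}_N=\sum_{m>N}\sigma_1(m)\mathbf{v}_m$,
\[
\big(\mathbf{Q}_N^{-1}\mathbf{e}_N\big)_n=\sum_{m>N}\sigma_1(m)\,\frac{D_N(n;m)}{\det\mathbf{Q}_N},\qquad D_N(n;m)\coloneqq\det\big(\mathbf{Q}_N\text{ with column }n\text{ replaced by }\mathbf{v}_m\big).
\]
Both $\det\mathbf{Q}_N$ and $D_N(n;m)$ are generalized Vandermonde determinants in the polynomial system $\{P_k\}$ evaluated on the node sets $\{2,\dots,N\}$ and $\{2,\dots,N\}\setminus\{n\}\cup\{m\}$, carried by the exponential column weights and the row normalization $1/P_k(2\pi)$. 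The latter is common to every entry of a row and cancels in the ratio, while the column weights leave a clean factor $e^{-2\pi(m-n)}$, so that
\[
\big(\mathbf{Q}_N^{-1}\mathbf{e}_N\big)_n=\sum_{m>N}\sigma_1(m)\,e^{-2\pi(m-n)}\,R_N(n;m),
\]
where $R_N(n;m)$ is the ratio of the two unweighted generalized Vandermonde determinants, i.e.\ the discrete cardinal coefficient expressing the vector $(P_k(2\pi m))_k$ in terms of $(P_k(2\pi n'))_k$, $2\le n'\le N$. With the arithmetic bound $\sigma_1(m)\ll m^{1+\varepsilon}$, the conjecture comes down to a uniform bound on $R_N(n;m)$ that grows slowly enough to be beaten by the exponential weight.

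This last estimate is where I expect the real difficulty to lie, and it is presumably the reason the statement is only conjectured. In the monomial model $P_k(x)=x^k$ the coefficient collapses to the Lagrange value $R_N(n;m)=\prod_{n'\neq n}(m-n')/(n-n')$, and a direct computation of the series, using $\binom{N+j-1}{j}\approx N^{j}/j!$ and $\sum_j (Ne^{-2\pi})^j/j!=e^{Ne^{-2\pi}}$, shows $\big(\mathbf{Q}_N^{-1}\mathbf{e}_N\big)_n=O\big(N^{n-1}e^{-(2\pi-e^{-2\pi})N}\big)\to0$; that is, in the solvable model the exponential weight $e^{-2\pi m}$ defeats the combinatorial growth of the cardinal coefficients. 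The obstacle is to transfer this mechanism to the genuine system, whose members $P_k$ have valuation $k+1$ but degree as large as $2N$ and are, up to normalization, the generalized Laguerre polynomials $L^{(k+1)}_{k+3}$ entering through the $_1F_1$ factors of Theorem \ref{thm3ishere}. For a high-degree, non-graded basis no closed Vandermonde formula is available and the conditioning of the collocation matrix degrades rapidly, so one must instead control the Lebesgue constants of interpolation at the nodes $2\pi n$ and show the cardinal coefficients $R_N(n;m)$ grow no faster than $e^{(2\pi-\delta)N}$ for some $\delta>0$. A matching quantitative lower bound on $|\det\mathbf{Q}_N|$, sharpening Lemma \ref{lemmadeterm}, would then combine with the smallness of the tail to close the argument for every fixed $n\le M$; obtaining this two-sided control on the confluent Laguerre system is, I believe, the genuinely hard and possibly novel step.
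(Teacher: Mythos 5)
The statement you are trying to prove is a \emph{conjecture} in the paper: the authors offer no proof, only numerical evidence (Tables \ref{tab2} and \ref{tab3}) together with Lemma \ref{lemmadeterm}, which establishes invertibility of ${\bf Q}_N$ and nothing more. So there is no proof in the paper to compare yours against, and the relevant question is whether your argument closes the conjecture. It does not, and you say so yourself. Your reduction is correct as far as it goes: since the true values satisfy every constraint of Theorem \ref{thm3ishere}, one has ${\bf Q}_N\,\boldsymbol{\sigma}_1^{(2:N)}=-{\bf 1}-{\bf e}_N$, hence the exact error identity $\boldsymbol{\sigma}_1^{\mathrm{approx}}-\boldsymbol{\sigma}_1^{(2:N)}={\bf Q}_N^{-1}{\bf e}_N$, the Cramer/multilinearity decomposition over the tail columns ${\bf v}_m$, and the extraction of the factor $e^{-2\pi(m-n)}$ are all legitimate, and your monomial-model computation correctly illustrates the mechanism one would hope for. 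This is a sensible framing that the paper does not supply.

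The genuine gap is the pair of quantitative estimates you defer at the end, and it is worth being precise about why they are not a routine technicality. First, your argument needs an upper bound on the determinant ratios $R_N(n;m)$, uniform in $m>N$, growing slower than $e^{2\pi(m-n)}$; but in the genuine system the row polynomials $P_k$ have degree $2k+4$, far exceeding the number of nodes $N-1$, so the columns $\bigl(P_k(2\pi n')\bigr)_k$ do not form a Chebyshev (interpolation) system in any standard sense, $R_N(n;m)$ is \emph{not} a Lagrange cardinal coefficient, and none of the classical Lebesgue-constant or total-positivity machinery applies. Second, and more fundamentally, you need a lower bound on $\bigl|\det\hat{\bf q}_N(2\pi)\bigr|$ that does not decay faster than $e^{-2\pi N}$ times the size of the replaced-column determinants. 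The only known proof that this determinant is nonzero --- Lemma \ref{lemmadeterm} --- proceeds by observing that it is a rational polynomial in $x$ evaluated at the transcendental point $x=2\pi$; transcendence arguments of this kind are intrinsically non-quantitative (the value could a priori be astronomically small, since the polynomial has degree and coefficient heights growing super-exponentially in $N$), so the lemma cannot simply be ``sharpened'' as your plan requires. Producing such a two-sided bound would demand either an explicit combinatorial evaluation of these generalized confluent-hypergeometric determinants or effective transcendence measures strong enough to beat factorial-height coefficients --- either of which would be the real theorem. Your proposal is thus an honest and correct reduction of the conjecture to its core difficulty, not a proof of it.
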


As numerical evidence, we present in Tables \ref{tab2} and \ref{tab3} the divisor function values $\sigma^\mrm{approx}_1(n)$ computed according to Eq. \eqref{eq56bold}, for $2\leq n\leq M= 20$ and $2\leq n\leq M = 50$, respectively, with $30$ significant figures. In both cases we chose the values of $M-1$ to be exactly equal to the number of linear equations that we solve. Note that for $M=20$, $\sigma^\mrm{approx}_1(n)$ precisely equals the exact divisor function values, to within 30 significant figures, for $n\leq 6$. For $M=50$, $\sigma^\mrm{approx}_1(n)$ equals the exact divisor function values, to within 30 significant figures, for $n\leq 32$.

\begin{table}[htp]
\begin{center}
\begin{tabular}{ |c|c| } 
 \hline
 $\sigma(2) = 3.00000000000000000000000000000$ & $\sigma(12) = 27.9999999999999996400890699949$ \\
 $\sigma(3) = 4.00000000000000000000000000000$ & $\sigma(13) = 14.0000000000000421206228597205$ \\
 $\sigma(4) = 7.00000000000000000000000000000$ & $\sigma(14) = 23.9999999999956031478005095380$ \\
 $\sigma(5) = 6.00000000000000000000000000000$ & $\sigma(15) = 24.0000000004038437551262919506$ \\
 $\sigma(6) = 12.0000000000000000000000000000$ & $\sigma(16) = 30.9999999679068197920815890622$ \\
 $\sigma(7) = 8.00000000000000000000000000422$ & $\sigma(17) = 18.0000021596112096215771414321$ \\
 $\sigma(8) = 14.9999999999999999999999992457$ & $\sigma(18) = 38.9998804716445031492998761128$ \\
 $\sigma(9) = 13.0000000000000000000001259399$ & $\sigma(19) = 20.0052181778670231340259446064$ \\
 $\sigma(10) = 17.9999999999999999999805051397$ & $\sigma(20) = 41.8318094963373465161153547796$ \\
 $\sigma(11) = 12.0000000000000000027747630780$ & $\sigma(21) = 35.5443381579433006144710767469$ \\
 \hline
\end{tabular}
\end{center}
\caption{The approximate divisor function values $\sigma^\mrm{approx}_1(n)$, computed from Eq.~\eqref{eq56bold}, for $M=N=20$. Note that the error increases as $n$ approaches $M$, however rounding $\sigma^\mrm{approx}_1(n)$  to the closest integer produces the correct divisor function values for all $n\leq~20$. Note that $\sigma_1(21)=32$.}
\label{tab2}
\end{table}

\begin{table}[htp]
\begin{center}
\begin{tabular}{ |c|c| } 
 \hline
 $\sigma(2) = 3.00000000000000000000000000000$ & $\sigma(27) = 40.0000000000000000000000000000$ \\
 $\sigma(3) = 4.00000000000000000000000000000$ & $\sigma(28) = 56.0000000000000000000000000000$ \\
 $\sigma(4) = 7.00000000000000000000000000000$ & $\sigma(29) = 30.0000000000000000000000000000$ \\
 $\sigma(5) = 6.00000000000000000000000000000$ & $\sigma(30) = 72.0000000000000000000000000000$ \\
 $\sigma(6) = 12.0000000000000000000000000000$ & $\sigma(31) = 32.0000000000000000000000000000$ \\
 $\sigma(7) = 8.00000000000000000000000000000$ & $\sigma(32) = 63.0000000000000000000000000000$ \\
 $\sigma(8) = 15.0000000000000000000000000000$ & $\sigma(33) = 48.0000000000000000000000000001$ \\
 $\sigma(9) = 13.0000000000000000000000000000$ & $\sigma(34) = 53.9999999999999999999999999862$ \\
 $\sigma(10) = 18.0000000000000000000000000000$ & $\sigma(35) = 48.0000000000000000000000012260$ \\
 $\sigma(11) = 12.0000000000000000000000000000$ & $\sigma(36) = 90.9999999999999999999998979327$ \\
 $\sigma(12) = 28.0000000000000000000000000000$ & $\sigma(37) = 38.0000000000000000000079576281$ \\
 $\sigma(13) = 14.0000000000000000000000000000$ & $\sigma(38) = 59.9999999999999999994213354568$ \\
 $\sigma(14) = 24.0000000000000000000000000000$ & $\sigma(39) = 56.0000000000000000390676790759$ \\
 $\sigma(15) = 24.0000000000000000000000000000$ & $\sigma(40) = 89.9999999999999975640254651705$ \\
 $\sigma(16) = 31.0000000000000000000000000000$ & $\sigma(41) = 42.0000000000001394294384620250$ \\
 $\sigma(17) = 18.0000000000000000000000000000$ & $\sigma(42) = 95.9999999999927259608230920320$ \\
 $\sigma(18) = 39.0000000000000000000000000000$ & $\sigma(43) = 44.0000000003429798458144049557$ \\     
 $\sigma(19) = 20.0000000000000000000000000000$ & $\sigma(44) = 83.9999999855323327138581857528$ \\
 $\sigma(20) = 42.0000000000000000000000000000$ & $\sigma(45) = 78.0000005391223423057227252954$ \\
 $\sigma(21) = 32.0000000000000000000000000000$ & $\sigma(46) = 71.9999825342148337933538683090$ \\   
 $\sigma(22) = 36.0000000000000000000000000000$ & $\sigma(47) = 48.0004817163156054254463598444$ \\
 $\sigma(23) = 24.0000000000000000000000000000$ & $\sigma(48) = 123.989009643563883194794288057$ \\
 $\sigma(24) = 60.0000000000000000000000000000$ & $\sigma(49) = 57.1989083146438762611930377305$ \\   
 $\sigma(25) = 31.0000000000000000000000000000$ & $\sigma(50) = 90.3291797370224242105622583438$ \\
 $\sigma(26) = 42.0000000000000000000000000000$ & $\sigma(51) = 95.4859364517319426901114775547$ \\  
 \hline
\end{tabular}
\end{center}
\caption{The approximate divisor function values $\sigma^\mrm{approx}_1(n)$, computed from Eq.~\eqref{eq56bold}, for $M=N=50$. Rounding  to the closest integer $\sigma^\mrm{approx}_1(n)$ produces the correct divisor function values for all $n\leq~49$. Note that $\sigma_1(50)=93$ and $\sigma_1(51) = 72$.}
\label{tab3}
\end{table}

\begin{lemma}
\label{lemmadeterm}

The matrix ${\bf Q}_N$ defined in Eq. \eqref{eq5p7matrix} is invertible for all $N\geq 2$.

\end{lemma}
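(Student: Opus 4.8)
The plan is to peel off the normalisation and the exponential weights, reduce to a bare polynomial determinant, and then use the transcendence of $\pi$ to turn the nonvanishing of that determinant \emph{at the specific point} $2\pi$ into the nonvanishing of a leading coefficient, where it becomes an ordinary Vandermonde. Write $P_k(x)$ for the summation kernel multiplying $\sigma_1(n)$ on the first line of \eqref{eq41theseareconstraints}, regarded as a polynomial in the variable $x=2\pi n$; by \eqref{eq42threecases} it has rational coefficients, order $k+1$ and degree $2k+4$, with leading coefficient $a^{(k)}_{2k+4}=(-1)^{k+3}(2k+6)/\big((k+3)!\,(2k+4)!\big)\neq0$. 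Since $Q_k(2\pi n)e^{-2\pi n}=\big(e^{-2\pi n}/(P_k(2\pi)e^{-2\pi})\big)P_k(2\pi n)$, the matrix from \eqref{eq5p7matrix} factors as $\mathbf{Q}_N=D_{\mathrm{row}}\,\mathbf{P}_N\,D_{\mathrm{col}}$, where $\mathbf{P}_N:=\big[P_k(2\pi n)\big]_{0\le k\le N-2,\,2\le n\le N}$ and $D_{\mathrm{row}}=\mathrm{diag}\big(1/(P_k(2\pi)e^{-2\pi})\big)$, $D_{\mathrm{col}}=\mathrm{diag}(e^{-2\pi n})$ are diagonal. Hence $\mathbf{Q}_N$ is invertible if and only if $\det\mathbf{P}_N\neq0$, the scalings being legitimate once we know $P_k(2\pi)\neq0$, which the argument below also yields.

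First I would replace the fixed point $2\pi$ by a formal variable and set $D(z):=\det\big[P_k(zn)\big]_{0\le k\le N-2,\,2\le n\le N}$. Because $P_k(zn)=\sum_m a^{(k)}_m n^m z^m$ has coefficients in $\mathbb{Q}$, we have $D(z)\in\mathbb{Q}[z]$. Every entry in row $k$ has $z$-degree exactly $2k+4$, so each term of the permutation expansion carries the same top degree $\sum_{k=0}^{N-2}(2k+4)=(N-1)(N+2)$, and the coefficient of $z^{(N-1)(N+2)}$ in $D(z)$ equals
\[
\Big(\prod_{k=0}^{N-2}a^{(k)}_{2k+4}\Big)\,\det\big[\,n^{2k+4}\,\big]_{0\le k\le N-2,\,2\le n\le N}.
\]
The determinant on the right is a generalized Vandermonde: pulling $n^{4}$ out of each column reduces it to $\big(\prod_{n=2}^{N}n^{4}\big)\det\big[(n^{2})^{k}\big]$, an ordinary Vandermonde in the distinct nodes $4,9,\dots,N^{2}$, hence nonzero. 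As each $a^{(k)}_{2k+4}\neq0$, the leading coefficient of $D(z)$ is nonzero, so $D$ is a nonzero element of $\mathbb{Q}[z]$ of degree $(N-1)(N+2)$; the same reasoning applied to the single kernel shows $P_k\not\equiv0$.

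The last step, and the conceptual crux, is that $\pi$ — and therefore $2\pi$ — is transcendental (Lindemann), so a nonzero polynomial with rational coefficients cannot vanish there. Consequently $\det\mathbf{P}_N=D(2\pi)\neq0$ and $P_k(2\pi)\neq0$ for every $k$, which proves the lemma. I would stress that this detour through transcendence is essential rather than cosmetic: invertibility is genuinely a property of the particular nodes $2\pi n$, not of the family $\{P_k\}$ in general position. Indeed $\{P_0,\dots,P_{N-2}\}$ is \emph{not} a Chebyshev system on $(0,\infty)$ — already $P_0$ has three positive real zeros — so the collocation determinant $\det[P_k(x_n)]$ does change sign as the $x_n$ vary, and no total-positivity, Cauchy–Binet sign-consistency, or Descartes-type argument can deliver the result directly (one checks, e.g. for $N=3$, that the maximal minors of the coefficient matrix already have mixed signs). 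The transcendence of $\pi$ is exactly what bypasses this obstacle, replacing ``nonvanishing at $2\pi$'' by the purely algebraic statement that the top-degree coefficient of $D$ is a nonzero Vandermonde. The only routine verifications left are the explicit value of $a^{(k)}_{2k+4}$ from \eqref{eq42threecases} and the observation that all entries of a given row share the same $z$-degree.
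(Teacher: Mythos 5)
Your proof is correct and takes essentially the same route as the paper's: factor the exponential weights and normalizations out of $\mathbf{Q}_N$, view the resulting determinant as a polynomial in the variable replacing $2\pi$ with rational coefficients, and use the transcendence of $\pi$ to reduce nonvanishing at $2\pi$ to the nonvanishing of a single extremal coefficient, which is a (generalized) Vandermonde determinant. The only differences are cosmetic: the paper extracts the lowest-degree coefficient (a Vandermonde in the nodes $n$, with $\det = N!\prod_{k=1}^{N-2}k!$) where you extract the top-degree coefficient (a Vandermonde in the nodes $n^2$), and your explicit treatment of the row normalizations $P_k(2\pi)e^{-2\pi}$ is in fact slightly more careful than the paper's.
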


\begin{proof}
Consider the matrix ${\bf q}_N$ as a function of a real variable $x$, that is
\be
\label{eq511matrixq}
\mathbf{q}_N(x) \coloneqq \lcb Q_k(x n) e^{-2\pi n} \rcb_{\substack{0\leq k \leq N-2\\ 1< n \leq N}},
\ee
so that 
\be
\mathbf{Q}_N = \mathbf{q}_N(2\pi)
\ee
for matrix $\mathbf{Q}_N$ defined in Eq. \eqref{eq5p7matrix}. Then $\mathbf{Q}_N$ is invertible if and only if $\mathbf{q}_N(2\pi)$ is invertible. Note that the factors of $e^{-2\pi n}$ in Eq. \eqref{eq511matrixq} are the same along the columns of the matrix, so they factor out of the determinant of matrix $\mathbf{q}_N(2\pi)$. Then $\mathbf{q}_N(2\pi)$ is invertible if and only if the matrix $\hat{\mathbf{q}}_N(2\pi)$ is invertible,~with
\be
\hat{\mathbf{q}}_N(x) \coloneqq \lcb Q_k(x n)\rcb_{\substack{0\leq k \leq N-2\\ 1< n \leq N}}.
\ee
The functions $Q_k(xn)$ are polynomials of degree $2k+4$ with rational coefficients, so the determinant of  $\hat{\mathbf{q}}_N(x)$ will be a polynomial in $x$ with rational coefficients. Because $\pi$ is transcendental and the coefficients are rational, this polynomial will vanish at $x=2\pi$ if and only if it is identically zero. However, from Eq.~\eqref{eq42threecases} the monomial of lowest exponent in $Q_k(x n)$ is $ (-nx)^{k+1}$. These monomials will give the contribution of lowest exponent in $x$ to $\det \lb \hat {\bf q}_N (x) \rb$, so this contribution will be proportional to the determinant of the Vandermonde matrix 
\be
V(N) \coloneqq \lcb n^{k+1} \rcb_{\substack{0\leq k \leq N-2\\ 1< n \leq N}}.
\ee 
This matrix has determinant given by
\begin{align}
\det(V(N)) = N!\prod_{k=1}^{N-2}k!,
\end{align}
and so it is never vanishing. Thus, $\det \lb \hat {\bf q}_N (x) \rb$ is not identically zero as a polynomial in $x$ and the result follows.
\end{proof}

\begin{table}[htp]
\begin{center}
\begin{tabular}{ |c|c| } 
 \hline
 $\sigma_3(2) = 9.00000000000000000000000000000$ & $\sigma_3(27) = 20440.0000000000000000000000000$ \\
 $\sigma_3(3) = 28.0000000000000000000000000000$ & $\sigma_3(28) = 25112.0000000000000000000000000$ \\
 $\sigma_3(4) = 73.0000000000000000000000000000$ & $\sigma_3(29) = 24390.0000000000000000000000000$ \\
 $\sigma_3(5) = 126.000000000000000000000000000$ & $\sigma_3(30) = 31752.0000000000000000000000000$ \\
 $\sigma_3(6) = 252.000000000000000000000000000$ & $\sigma_3(31) = 29792.0000000000000000000000000$ \\
 $\sigma_3(7) = 344.000000000000000000000000000$ & $\sigma_3(32) = 37449.0000000000000000000000000$ \\
 $\sigma_3(8) = 585.000000000000000000000000000$ & $\sigma_3(33) = 37296.0000000000000000000000002$ \\
 $\sigma_3(9) = 757.000000000000000000000000000$ & $\sigma_3(34) = 44225.9999999999999999999999843$ \\
 $\sigma_3(10) = 1134.00000000000000000000000000$ & $\sigma_3(35) = 43344.0000000000000000000014357$ \\
 $\sigma_3(11) = 1332.00000000000000000000000000$ & $\sigma_3(36) = 55260.9999999999999999998770669$ \\
 $\sigma_3(12) = 2044.00000000000000000000000000$ & $\sigma_3(37) = 50654.0000000000000000098501508$ \\
 $\sigma_3(13) = 2198.00000000000000000000000000$ & $\sigma_3(38) = 61739.9999999999999992644180773$ \\
 $\sigma_3(14) = 3096.00000000000000000000000000$ & $\sigma_3(39) = 61544.0000000000000509622777812$ \\
 $\sigma_3(15) = 3528.00000000000000000000000000$ & $\sigma_3(40) = 73709.9999999999967414434146483$ \\
 $\sigma_3(16) = 4681.00000000000000000000000000$ & $\sigma_3(41) = 68922.0000000001911315728467208$ \\
 $\sigma_3(17) = 4914.00000000000000000000000000$ & $\sigma_3(42) = 86687.9999999897884814899862414$ \\
 $\sigma_3(18) = 6813.00000000000000000000000000$ & $\sigma_3(43) = 79508.0000004927595760801165753$ \\     
 $\sigma_3(19) = 6860.00000000000000000000000000$ & $\sigma_3(44) = 97235.9999787415909482729911794$ \\
 $\sigma_3(20) = 9198.00000000000000000000000000$ & $\sigma_3(45) = 95382.0008096383425947318643654$ \\
 $\sigma_3(21) = 9632.00000000000000000000000000$ & $\sigma_3(46) = 109511.973211472559620985052821$ \\   
 $\sigma_3(22) = 11988.0000000000000000000000000$ & $\sigma_3(47) = 103824.753967741976422710694216$ \\
 $\sigma_3(23) = 12168.0000000000000000000000000$ & $\sigma_3(48) = 131050.463927831941005026093101$ \\
 $\sigma_3(24) = 16380.0000000000000000000000000$ & $\sigma_3(49) = 118316.076980132304317772971012$ \\   
 $\sigma_3(25) = 15751.0000000000000000000000000$ & $\sigma_3(50) = 137354.040515591541150389821587$ \\
 $\sigma_3(26) = 19782.0000000000000000000000000$ & $\sigma_3(51) = 176690.854995111802482932995631$ \\  
 \hline
\end{tabular}
\end{center}
\caption{The approximate divisor function values $\sigma^\mrm{approx}_3(n)$, computed from the analogue of Eq.~\eqref{eq56bold} for $a=3$, for $M=N=50$. Rounding  to the closest integer $\sigma^\mrm{approx}_3(n)$ produces the correct divisor function values for all $n\leq~46$.}
\label{tab4}
\end{table}

\begin{table}[htp]
\begin{center}
\begin{tabular}{ |c|c| } 
 \hline
 $\sigma_5(2) = 33.0000000000000000000000000000$ & $\sigma_5(27) = 14408200.0000000000000000000000$ \\
 $\sigma_5(3) = 244.000000000000000000000000000$ & $\sigma_5(28) = 17766056.0000000000000000000000$ \\
 $\sigma_5(4) = 1057.00000000000000000000000000$ & $\sigma_5(29) = 20511150.0000000000000000000000$ \\
 $\sigma_5(5) = 3126.00000000000000000000000000$ & $\sigma_5(30) = 25170552.0000000000000000000000$ \\
 $\sigma_5(6) = 8052.00000000000000000000000000$ & $\sigma_5(31) = 28629152.0000000000000000000000$ \\
 $\sigma_5(7) = 16808.0000000000000000000000000$ & $\sigma_5(32) = 34636833.0000000000000000000000$ \\
 $\sigma_5(8) = 33825.0000000000000000000000000$ & $\sigma_5(33) = 39296688.0000000000000000000003$ \\
 $\sigma_5(9) = 59293.0000000000000000000000000$ & $\sigma_5(34) = 46855313.9999999999999999999744$ \\
 $\sigma_5(10) = 103158.000000000000000000000000$ & $\sigma_5(35) = 52541808.0000000000000000024043$ \\
 $\sigma_5(11) = 161052.000000000000000000000000$ & $\sigma_5(36) = 62672700.9999999999999997881973$ \\
 $\sigma_5(12) = 257908.000000000000000000000000$ & $\sigma_5(37) = 69343958.0000000000000174458929$ \\
 $\sigma_5(13) = 371294.000000000000000000000000$ & $\sigma_5(38) = 81711299.9999999999986617370992$ \\
 $\sigma_5(14) = 554664.000000000000000000000000$ & $\sigma_5(39) = 90595736.0000000000951717342883$ \\
 $\sigma_5(15) = 762744.000000000000000000000000$ & $\sigma_5(40) = 105736949.999999993757817615787$ \\
 $\sigma_5(16) = 1082401.00000000000000000000000$ & $\sigma_5(41) = 115856202.000000375328863008164$ \\
 $\sigma_5(17) = 1419858.00000000000000000000000$ & $\sigma_5(42) = 135338015.999979456901587961233$ \\
 $\sigma_5(18) = 1956669.00000000000000000000000$ & $\sigma_5(43) = 147008444.001014948920706266757$ \\     
 $\sigma_5(19) = 2476100.00000000000000000000000$ & $\sigma_5(44) = 170231963.955195845427215352208$ \\
 $\sigma_5(20) = 3304182.00000000000000000000000$ & $\sigma_5(45) = 185349919.745044668152688836199$ \\
 $\sigma_5(21) = 4101152.00000000000000000000000$ & $\sigma_5(46) = 212399292.987763796989448216956$ \\   
 $\sigma_5(22) = 5314716.00000000000000000000000$ & $\sigma_5(47) = 229346704.548832254245463654739$ \\
 $\sigma_5(23) = 6436344.00000000000000000000000$ & $\sigma_5(48) = 264065564.199393368665742826277$ \\
 $\sigma_5(24) = 8253300.00000000000000000000000$ & $\sigma_5(49) = 283249010.702784989739509435916$ \\   
 $\sigma_5(25) = 9768751.00000000000000000000000$ & $\sigma_5(50) = 311853108.670425831350122598682$ \\
 $\sigma_5(26) = 12252702.0000000000000000000000$ & $\sigma_5(51) = 441339084.443169397775938475135$ \\  
 \hline
\end{tabular}
\end{center}
\caption{The approximate divisor function values $\sigma^\mrm{approx}_5(n)$, computed from the analogue of Eq.~\eqref{eq56bold} for $a=5$, for $M=N=50$. Rounding to the closest integer $\sigma^\mrm{approx}_5(n)$ produces the correct divisor function values for all $n\leq~44$.}
\label{tab5}
\end{table}

Numerical experimentation suggests that the method in Conjecture~\ref{conj4inversion} works to determine the divisor functions $\sigma_a(n)$ for other values of $a$ as well, such as $a=3,5$, when one replaces the $Q_k(x)$ polynomials with the appropriate expressions from Eq. \eqref{eq41theseareconstraints} (see Tables \ref{tab4} and  \ref{tab5}). It would be interesting to explore these issues systematically.

Finally, we would like to discuss how the ideas presented above could relate to already existing work. The connection between $\zeta^2(s), \sigma_0(n)=d(n)$ and $K_{0}(s)$ is natural, since we have
\be
\zeta^2(s) = \sum_{n=1}^\infty \frac{d(n)}{n^s}, \quad \Re(s) >1,
\ee 
and a well-known Mellin transform relates the $K_a(x)$ Bessel function to a product of two Gamma functions, 
\begin{align}
	\int_0^\infty K_a(x) x^{s-1}dx = 2^{s-2}\Gamma\bigg(\frac{s-a}{2}\bigg)\Gamma\bigg(\frac{s+a}{2}\bigg).
\end{align}
Koshliakov considered infinite sums involving $\sigma_0(n)$ twisted by the Bessel function $K_0$. These are given in terms of integrals involving the square of $\Xi(t)=\xi(\frac{1}{2}+it)$. For instance he was able to show \cite{Koshliakov1, Koshliakov2, Koshliakov3} that for $\theta>0$ one has that
\begin{align*}
-\frac{32}{\pi} \int_0^\infty \Xi^2\bigg(\frac{t}{2}\bigg) \cos\bigg(\frac{1}{2}t \log \theta \bigg) \frac{dt}{(1+t^2)^2} = \sqrt{\theta} \bigg(\frac{\gamma-\log(4\pi \theta)}{\theta} - 4 \sum_{n=1}^\infty \sigma_0(n) K_0 (2\pi n \theta)\bigg),
\end{align*}
where $\gamma$ is the Euler constant. Ramanujan \cite{Ramanujan1, Ramanujan2} also showed that for $\theta>0$
\begin{align*}
-\frac{1}{\pi^{3/2}} \int_0^\infty \bigg| \Gamma\bigg( \frac{-1+it}{4}\bigg) \bigg|^2 \Xi^2\bigg(\frac{t}{2}\bigg)\cos\bigg(\frac{1}{2}t \log \theta \bigg) \frac{dt}{1+t^2} = \sqrt{\theta}\bigg(\frac{\gamma-\log(2\pi \theta)}{2\theta} + \sum_{n=1}^\infty \lambda(n\theta)\bigg),
\end{align*}
where the function $\lambda$ is given by
\begin{align}
\lambda(x) := \frac{\Gamma'}{\Gamma}(x) + \frac{1}{2x} - \log x.
\end{align}
The arithmetic link to $\sigma_0(n)$ comes from, see \cite[Eq. (1.12)]{Dixit1},
%\begin{align*}
% - \frac{1}{2} \sum_{n=1}^\infty \lambda(n \theta) = \int_0^\infty x e^{-2 \pi \theta x} \sum_{n=1}^\infty \frac{\sigma_0(n)}{x^2+n^2} dx.
%\end{align*}
\begin{align}
   - \frac{1}{2}\sum_{n = 1}^\infty \lambda (n\theta ) &= \int_0^\infty x{e^{ - 2\pi \theta x}}\sum_{n = 1}^\infty  \frac{\sigma _0(n)}{x^2 + n^2}dx \nonumber \\ %= \sum_{n = 1}^\infty \sigma _0(n)\int_0^\infty  xe^{ - 2\pi \theta x}\frac{dx}{x^2 + n^2} \nonumber \\
   &= \sum_{n = 1}^\infty \sigma _0(n)\left[ \frac{1}{2}\sin (2\pi n\theta )\left( \pi  - 2\operatorname{Si} (2\pi n\theta ) \right) - \cos (2\pi n\theta )\operatorname{Ci} (2\pi n\theta ) \right],
\end{align}
where the cosine and sine integral are defined by
\begin{align}
  \operatorname{Ci} (x) =  - \int_x^\infty   \frac{\cos t}{t}dt\quad \textnormal{and} \quad \operatorname{Si} (x) = \int_0^x {} \frac{\sin t}{t}dt.
\end{align}
In the 2000's and 2010's Berndt and Dixit \cite{BerndtDixit, Dixit1, Dixit2} started a profound series of generalizations to Koshliakov's and Ramanujan's identities. Some of the generalizations culminated in \cite{DixitRoblesRoyZaharescu1, DixitRoblesRoyZaharescu2}. Further information on how divisor functions relate to Bessel functions can be found in e.g. \cite[Chapters 3 and 4]{Ivic} as well as in \cite[Chapter 12]{Titchmarsh}. It would be very interesting to study the potential connections between Problem \ref{conj2} and the Koshliakov and Ramanujan identities. 

Relations for the divisor functions $\sigma_a(n)$ reminiscent to the ones in the present paper were also considered in \cite{BDRZpathways}, for $-1/2 < \Re(a) < 1/2$. It would be interesting to investigate if these relations can be recovered from our general framework.

Lastly, we end by opening the possibility of considering other arithmetic functions such as the M\"{o}bius function $\mu$, the Euler totient function $\varphi$, or the von Mangoldt function $\Lambda$, to name a few. In these situations, the generating functions will need to be adapted. For instance, in the case of the M\"{o}bius function one employs
\begin{align}
	\frac{1}{\zeta(s)} = \sum_{n=1}^\infty \frac{\mu(n)}{n^s}, \quad \Re(s) > 1.
\end{align}
In principle, the techniques of the present paper can also be applied here, with certain natural modifications. The case of the inverse of $\zeta(s)$ requires special care when performing contour integration. Originally, the fact that the horizontal lines of a rectangular contour tend to zero could only be proved under the assumption of the Riemann Hypothesis, \cite[$\mathsection$14 and Eq. (14.16.2)]{Titchmarsh}. However, due to the work of Ramachandra and Sankaranarayanan \cite{RamachandraSankaranarayanan} the necessary tools are now unconditional, see also Bartz \cite{Bartz}, Inoue \cite{Inoue} and K\"{u}hn and Robles \cite{KuhnRobles} for related applications.

\textbf{Acknowledgments.} B. S. would like to acknowledge the Northwestern University Amplitudes and Insights group, Department of Physics and Astronomy, and Weinberg College for support. The work of B. S. was supported in part by the Department of Energy under Award Number DE-SC0021485.

\end{document}